\tikzstyle{dmatrix}=[matrix of math nodes,row sep=2.5em, column sep=2.5em,
\newcommand{\GrassmannianSymbol}{G}
\newcommand{\NormalBundleSymbol}{\mathcal{N}}
\newcommand{\TautBundleSymbol}{S}
\newcommand{\UnivQuotBundleSymbol}{Q}
\newcommand{\NormalBundle}{\NormalBundleSymbol}
\newcommand{\MyNormalBundle}{\NormalBundle_{C/\PP^7}}
\newcommand{\ExplicitNormalBundle}{\bigwedge\nolimits^2 S_C^\vee}
\newcommand{\MyGrassmannian}{\GrassmannianSymbol}
\newcommand{\Grassmannian}[2]{\GrassmannianSymbol(#1,#2)}
\newcommand{\TautBundle}{\TautBundleSymbol}
\newcommand{\UnivQuotBundle}{\UnivQuotBundleSymbol}
\DeclareMathOperator{\Cliff}{Cliff}
\DeclareMathOperator{\Hom}{Hom}
\DeclareMathOperator{\SpG}{SpG}
\DeclareMathOperator{\Sym}{Sym}
\DeclareMathOperator{\coker}{coker}
\DeclareMathOperator{\image}{im}
\DeclareMathOperator{\rk}{rk}
\newcommand{\Z} {\mathbb{Z}}
\newcommand{\PP}{\mathbb{P}}
\renewcommand{\P} {\mathbb{P}}
\newtheorem{proposition}{Proposition}[section]
\newtheorem{theorem}[proposition]{Theorem}
\newtheorem*{theorem*}{Theorem}
\newtheorem{corollary}[proposition]{Corollary}
\newtheorem*{corollary*}{Corollary}
\newtheorem{conjecture}[proposition]{Conjecture}
\newtheorem*{conjecture*}{Conjecture}
\newtheorem{lemma}[proposition]{Lemma}
\newtheorem*{lemma*}{Lemma}
\theoremstyle{definition}
\newtheorem{definition}[proposition]{Definition}
\newtheorem*{definition*}{Definition}
\newtheorem{example}[proposition]{Example}
\newtheorem*{example*}{Example}
\newtheorem{remark}[proposition]{Remark}
\newtheorem*{remark*}{Remark}
\newtheorem*{question*}{Question}
\newtheorem*{result*}{Result}
\newcommand{\MgSymbol}{\mathcal{M}}
\newcommand{\MgBNLocus}[3]{\MgSymbol_{#1,#3}^{#2}}
\newcommand{\Addresses}{{
  \bigskip
  \footnotesize
  \textsc{Institut für Algebraische Geometrie, Leibniz Universität Hannover, Welfengarten 1}\par
  30167 \textsc{Hannover, Germany}\par\nopagebreak\vspace{3mm}
  \textsc{Riemann Center for Geometry and Physics, Leibniz Universität Hannover, Appelstrasse 2}\par
  30167 \textsc{Hannover, Germany}\par\nopagebreak
  \textit{E-mail address}:  \texttt{math@gregorbruns.eu}




}}
\newcommand{\wedgesc}{\bigwedge^2 S_C^\vee}
\numberwithin{equation}{section}
\title{The normal bundle of canonical genus $8$ curves}
\author{Gregor Bruns}
\date{}
\begin{document}

\maketitle

\begin{abstract}
  We study the stability of the normal bundle of canonical
  genus $8$ curves and prove that on a general curve the bundle is stable.
  The proof rests on Mukai's description of these curves as
  linear sections of a Grassmannian $\Grassmannian{2}{6}$.
  This is the next case of a conjecture by M.~Aprodu, G.~Farkas,
  and A.~Ortega:  the general canonical curve of every genus $g \geq 7$
  should have stable normal bundle.
  We also give some more evidence for this conjecture in higher genus.
\end{abstract}

\section{Introduction}

To an embedded curve $C \hookrightarrow \PP^r$ with $L = \mathcal{O}_C(1)$
one can naturally attach two distinguished vector bundles.  First there is
the restricted cotangent bundle $\Omega_{\PP^r}(1)|_C$ which by the Euler sequence
coincides with the kernel bundle $M_L$ in the evaluation exact sequence
\begin{equation*}
  0 \rightarrow M_L \rightarrow H^0(C, L) \otimes \mathcal{O}_C \rightarrow L \rightarrow 0.
\end{equation*}
A lot is known about the geometry of $M_L$.  For instance, it governs the syzygies of
the embedding of $C$, and a simple argument (like in \autocite{EusenSchreyer2012}) shows
that in most cases $M_L$ is a stable vector bundle.
In particular this is true for the canonical embedding, i.e., for $L = \omega_C$.

The second distinguished vector bundle is the normal bundle $\mathcal{N}_{C/\PP^r}$
of the embedding.  Of particular interest is again the normal bundle
$\mathcal{N}_{C/\PP^{g-1}}$ of the canonical embedding of $C$.
As it turns out,
the study of normal bundles is much more delicate than the study of
the restricted cotangent bundles.  In particular, not much is known about stability
properties, not even for canonical curves. 

Of course a non-hyperelliptic canonical curve of genus $3$ is a plane quartic,
hence the normal bundle is a line bundle and therefore stable.
Moving on, canonical curves of genera $4$ and $5$ are complete intersections,
hence they do not have stable normal bundles.
In \autocite{AFO2016} it was shown that a
tetragonal curve of genus at least $6$ has unstable normal bundle.
This applies in particular to all canonical curves of genus $6$.

On the other hand, in the same paper \autocite{AFO2016}
the authors prove the first nontrivial positive result:
in genus $7$ the general canonical curve has stable normal bundle.
Observe that, starting from genus $7$, the general curve is no longer tetragonal.
Based on these considerations, the authors then made the following conjecture:
\begin{conjecture}[{\autocite[Conjecture 0.4]{AFO2016}}]
  \label{conj:afo}
  The normal bundle of a general canonical curve of genus $g \geq 7$ is stable.
\end{conjecture}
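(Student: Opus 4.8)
The plan is to prove the case $g=8$ of Conjecture~\ref{conj:afo}: for a general curve $C$ of genus $8$, canonically embedded in $\PP^7$, the normal bundle $\MyNormalBundle$ is stable. Everything rests on Mukai's description of the general such curve as a transverse linear section $C = \Grassmannian{2}{6} \cap \PP^7$ of the Plücker-embedded Grassmannian $\MyGrassmannian = \Grassmannian{2}{6} \subset \PP^{14}$, with the hyperplane bundle restricting to $\omega_C$; let $0 \to \TautBundle \to \mathcal{O}_{\MyGrassmannian}^{\oplus 6} \to \UnivQuotBundle \to 0$ be the tautological sequence ($\rk\TautBundle = 2$). Because $C$ is cut out transversally in $\MyGrassmannian$ by a linear subspace, there is a canonical isomorphism $\MyNormalBundle \cong (\NormalBundle_{\MyGrassmannian/\PP^{14}})|_C$, and restricting the Euler sequence of $\PP^{14} = \PP(\bigwedge^{2}\CC^{6})$ to $\MyGrassmannian$ and comparing with $T_{\MyGrassmannian} = \TautBundle^{\vee}\otimes\UnivQuotBundle$ identifies $\NormalBundle_{\MyGrassmannian/\PP^{14}}$ with $\bigwedge^{2}\UnivQuotBundle \otimes \mathcal{O}_{\MyGrassmannian}(1)$. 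Restricting to $C$ then shows that $\MyNormalBundle$ is a line-bundle twist of $\bigwedge^{2}\UnivQuotBundle_C$ -- concretely $\MyNormalBundle \cong \bigwedge^{2}\UnivQuotBundle_C \otimes \omega_C = \ExplicitNormalBundle \otimes \omega_C$ with $S_C := \UnivQuotBundle_C^{\vee}$ the rank-four tautological bundle under $\MyGrassmannian \cong \Grassmannian{4}{6}$ -- and a degree count gives $\rk = 6$ and $\mu = 21$ on both sides. As stability is unaffected by twisting with a line bundle, it suffices to prove that $\bigwedge^{2}\UnivQuotBundle_C$ is stable; this is parallel to how \autocite{AFO2016} treated genus $7$ via the spinor tenfold $\OG(5,10)$.

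For this I would exploit the exceptional isomorphism $\SL_4 \cong \mathrm{Spin}_6$. Wedging gives a nondegenerate symmetric pairing $\bigwedge^{2}\UnivQuotBundle_C \otimes \bigwedge^{2}\UnivQuotBundle_C \to \bigwedge^{4}\UnivQuotBundle_C = \det\UnivQuotBundle_C$, so $\bigwedge^{2}\UnivQuotBundle_C$ is an $\SO(6)$-bundle valued in $\det\UnivQuotBundle_C$. If $F \subset \bigwedge^{2}\UnivQuotBundle_C$ were destabilizing, then replacing $F$ by its radical $F \cap F^{\perp}$ -- and using that a nonzero nondegenerate $\det\UnivQuotBundle_C$-valued orthogonal bundle has slope exactly $\tfrac{1}{2}\deg\det\UnivQuotBundle_C = \mu(\bigwedge^{2}\UnivQuotBundle_C) = 7$ -- leaves a destabilizing \emph{isotropic} subbundle, unless $\bigwedge^{2}\UnivQuotBundle_C$ already splits orthogonally. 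Isotropic subbundles are transparent here: in dimension $4$ a $2$-vector is isotropic iff decomposable (the Plücker relation), and under $\SL_4 \cong \mathrm{Spin}_6$ the isotropic subbundles of $\bigwedge^{2}\UnivQuotBundle_C$ correspond to subbundles of $\UnivQuotBundle_C$ of ranks $1$, $2$, $3$ (rank-$1$ isotropics are $\det W$ for $W \subseteq \UnivQuotBundle_C$ of rank $2$; the two families of maximal isotropics are $\bigwedge^{2}W$ for $W$ of rank $3$ and $\ell\wedge\UnivQuotBundle_C$ for $\ell$ a line subbundle). A direct slope computation then shows that once $\UnivQuotBundle_C$ is stable, every isotropic subbundle of $\bigwedge^{2}\UnivQuotBundle_C$ has slope \emph{strictly} below $7$; hence $\bigwedge^{2}\UnivQuotBundle_C$ is stable unless it has a proper nondegenerate orthogonal subbundle, which by $\SL_4\cong\mathrm{Spin}_6$ forces $\UnivQuotBundle_C$ to be decomposable (impossible if $\UnivQuotBundle_C$ is stable), a tensor product $A\otimes B$ of rank-two bundles, or to carry a line-bundle-valued symplectic form. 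So the whole theorem reduces to two assertions about the general curve $C$ of genus $8$: (i) $\UnivQuotBundle_C$ is stable, and (ii) $\UnivQuotBundle_C$ is neither a tensor product of two rank-two bundles nor (twisted) symplectic.

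I expect (i) and (ii) -- the Brill--Noether input -- to be the heart of the matter and the main obstacle. For (i), a destabilizing sub- or quotient bundle of $\UnivQuotBundle_C$, pushed through the defining sequence $0 \to \TautBundle_C \to \mathcal{O}_C^{\oplus 6} \to \UnivQuotBundle_C \to 0$ together with Mukai's uniqueness of the rank-two Mukai bundle $\TautBundle_C^{\vee}$ (the unique bundle with $\det = \omega_C$ and $h^0 = 6$), must produce a linear series on $C$ more special than permitted -- contradicting $\gon(C) = 5$ and $W^{r}_{d}(C) = \varnothing$ whenever $\rho(8,r,d) < 0$. Quotient line bundles of $\UnivQuotBundle_C$ yield such series directly (it is globally generated), so the delicate case is a sub-line-bundle $\ell \subseteq \UnivQuotBundle_C$ of degree $\geq 4$: there one passes to $\UnivQuotBundle_C/\ell$, or dualizes to $\UnivQuotBundle_C^{\vee} \subseteq \mathcal{O}_C^{\oplus 6}$, and uses the resulting presentation of the Mukai bundle as an extension by $\ell^{\vee}$ to locate the forbidden geometry; the analogous analysis for rank-two and rank-three subbundles finishes (i). For (ii), both a splitting $\UnivQuotBundle_C \cong A\otimes B$ and a line-bundle-valued symplectic form propagate, through the same sequence, to extra symmetry of $\TautBundle_C^{\vee}$, hence to a family of pencils on $C$ that a general genus-$8$ curve does not carry -- so each is a proper (expectedly divisorial) condition on $\Mg{8}$, satisfied by neither general $C$. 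The two points I would watch most closely are the sub-line-bundle bookkeeping in (i) and the strictness of every inequality: it is exactly because the only way $\bigwedge^{2}\UnivQuotBundle_C$ could be merely semistable rather than stable is through one of the exceptional orthogonal splittings in (ii) that (ii) is a genuine step, not a formality.
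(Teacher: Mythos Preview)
Your plan shares its first half with the paper: both identify $\MyNormalBundle(-1)\cong\ExplicitNormalBundle$ via Mukai and both reduce to the stability of the rank-$4$ bundle $S_C^\vee$ (your $\UnivQuotBundle_C$). Your sketch of (i) points in the right direction; the paper's actual argument is a rank-by-rank analysis of quotients, with the delicate rank-$3$ quotient settled using the unique surjection $S_C^\vee\to\zeta$ for each $\zeta\in W^1_5(C)$ (Lemma~\ref{lem:morph-to-g15}) together with the indecomposability of the Mukai bundle $E_C$. Where you genuinely diverge is the passage from stability of $S_C^\vee$ to stability of $\ExplicitNormalBundle$. The paper appeals to the Hermite--Einstein correspondence for polystability and then excludes each possible rank of a summand through the sequence $0\to Q_\eta^\vee\otimes\eta\to\ExplicitNormalBundle\to Q_\eta\otimes\zeta\to 0$ and the key vanishing $H^0\bigl(C,\MyNormalBundle(-1)\otimes\xi^{-1}\bigr)=0$ for $\xi\in W^1_5(C)$ (Lemma~\ref{lem:normalbundle-no-g15}). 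Your orthogonal-geometry route via $\SL_4\cong\mathrm{Spin}_6$ is more structural and, pleasantly, bypasses Hermite--Einstein altogether: once $S_C^\vee$ is stable, your isotropic bookkeeping is correct and does force any destabilizing subbundle to be a nondegenerate orthogonal summand, and your identification of the $1{+}5$, $2{+}4$, $3{+}3$ orthogonal splittings with (twisted-)symplectic, direct-sum, and tensor-product structures on $S_C^\vee$ is right.

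The genuine gap is step (ii). Saying that a symplectic or tensor-product structure ``propagates to extra pencils'' is not an argument; neither condition visibly manufactures a forbidden linear series on $C$. The symplectic case \emph{can} in fact be closed in your framework: a $\theta$-valued symplectic form on $S_C^\vee$ makes $Q_\eta^\perp\cong\zeta^{-1}\otimes\theta$ a line subbundle of $Q_\eta$ of degree $2$, contradicting Lemma~\ref{lem:sublinebundle-deg1}. But I do not see how to exclude $S_C^\vee\cong A\otimes B$ by Brill--Noether bookkeeping alone, and you give no dimension count either; it seems to require precisely the kind of cohomological input the paper supplies in Lemma~\ref{lem:normalbundle-no-g15}. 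So your proposal is a legitimate and attractive alternative route, but as written the $3{+}3$ case is a hole you still need to fill---and even a successful deformation argument there would give only the generic statement, whereas the paper's method yields the sharper result that stability holds for \emph{every} $C$ with $W^2_7(C)=\emptyset$.
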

In this paper we consider the next case, genus $8$, and provide a full solution:
\begin{theorem}
  \label{thm:main-theorem}
  The normal bundle of a canonical curve $C$ of genus $8$ is stable
  if and only if the curve does not have a $\mathfrak{g}^2_7$.
  Furthermore, it is polystable if and only if the curve is not tetragonal.
\end{theorem}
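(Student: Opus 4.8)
The plan is to split canonical genus~$8$ curves $C \subset \PP^7$ into three classes and treat each separately. Throughout, $\MyNormalBundle$ has rank $6$ and degree $c_1(\Tangent_{\PP^7}) \cdot C - \deg \Tangent_C = 8 \cdot 14 + 14 = 126$, hence slope $21$. The classes are: \emph{(a)} $C$ has no $\mathfrak g^2_7$; \emph{(b)} $C$ has a $\mathfrak g^2_7$ but is not tetragonal; \emph{(c)} $C$ is tetragonal. A preliminary point, which is what makes the two stated equivalences compatible, is that every tetragonal genus~$8$ curve carries a $\mathfrak g^2_7$; this is a short Brill--Noether computation built on the scroll of a $\mathfrak g^1_4$, and I would record it first. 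Granting it, the theorem reduces to three claims: in class~(a) the bundle is stable; in class~(b) it is polystable but not stable; in class~(c) it is not semistable. Class~(c) is precisely the assertion for tetragonal curves recalled in the introduction from \autocite{AFO2016}, so it needs nothing new. Classes (b)--(c) then give ``only if'' in the first equivalence together with ``polystable $\Rightarrow$ not tetragonal'', classes (a)--(b) give ``not tetragonal $\Rightarrow$ polystable'', and class~(a) gives ``no $\mathfrak g^2_7 \Rightarrow$ stable''.

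Class~(a) is the heart. By Mukai, $C$ is a transverse linear section $\Grassmannian{2}{6} \cap \PP^7$, and since $C$ is cut out on the Grassmannian by that $\PP^7$ one has $\MyNormalBundle \cong \NormalBundle_{\Grassmannian{2}{6}/\PP^{14}}|_C \cong \wedgesc$ for a rank-$4$ bundle $S_C$; comparing with $\det \MyNormalBundle = \omega_C^{\otimes 9}$ forces $\det S_C^\vee = \omega_C^{\otimes 3}$, so $\mu(S_C^\vee) = 21/2$. First I would prove $S_C^\vee$ stable: a destabilizing subbundle, read through the six sections that realize the Mukai map $C \to \Grassmannian{2}{6}$, produces a line bundle of low degree with too many sections and hence forces $\Cliff(C) \le 2$, so $C$ has a $\mathfrak g^2_7$ --- a contradiction. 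In characteristic zero, stability of $S_C^\vee$ gives semistability of $\wedgesc$. Moreover $\wedgesc$ carries the nondegenerate Pl\"ucker pairing $\wedgesc \otimes \wedgesc \to \bigwedge^4 S_C^\vee = \omega_C^{\otimes 3}$, making it (after a line-bundle twist) an orthogonal bundle of rank~$6$; reductions of the structure group of $S_C^\vee$ and of this orthogonal bundle correspond under the exceptional isogeny $\mathrm{SL}_4 \to \mathrm{SO}_6$. If $\wedgesc$ were not stable, a minimal destabilizing subbundle $W$ (stable, of slope $21$) has, since $W^{\perp}$ is again destabilizing, rank $\le 3$; as $W \cap W^{\perp}$ is a subsheaf of the stable $W$ it is either $W$ (so $W$ isotropic) or $0$, in which case a degree count gives $\wedgesc = W \oplus W^{\perp}$ with both summands orthogonal of slope $21$. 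In the isotropic case: if $\rk W = 3$ then $W$ is a Lagrangian subbundle, hence of the form $\bigwedge^2 E'$ or $\ell \otimes (S_C^\vee/\ell)$ for a rank-$3$ subbundle $E'$ or a line subbundle $\ell$ of $S_C^\vee$, and so has even degree, while slope $21$ would require degree $63$; if $\rk W \in \{1,2\}$ one extracts from $W$ subbundles of $S_C^\vee$ contradicting its stability.

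This leaves the splitting case, and disposing of it is the main new input. Transporting the orthogonal summand back through $\mathrm{SL}_4 \to \mathrm{SO}_6$: a rank-$2$ summand forces $S_C^\vee \cong V \oplus V'$ with $V, V'$ of rank $2$, again contradicting stability of $S_C^\vee$; a rank-$1$ summand forces a nondegenerate alternating form on $S_C^\vee$ valued in $\theta^{\otimes 3}$ for a theta characteristic $\theta$, necessarily with $h^0(\theta) = 2$ (if $h^0(\theta) \ge 3$ then $\theta$ is itself a $\mathfrak g^2_7$); and a rank-$3$ summand forces $S_C^\vee \cong V \otimes V' \otimes(\text{line bundle})$ with $V, V'$ of rank $2$. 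Both remaining degenerations are excluded as follows. Pull such an alternating form back along the surjection $\CC^6 \otimes \mathcal O_C \twoheadrightarrow \UnivQuotBundle|_C$ (with kernel the rank-$2$ tautological bundle): one obtains $\tilde\psi = \omega_1 s_1 + \omega_2 s_2$ with $\omega_i \in \bigwedge^2(\CC^6)^{\vee}$ and $s_1, s_2$ a basis of $H^0(\theta)$. Nondegeneracy forces $|\theta|$ to be base-point-free and the tautological $2$-plane at $p$ to be exactly the radical of $s_1(p)\omega_1 + s_2(p)\omega_2$, which depends only on the image of $p$ under the degree-$7$ morphism $\phi_\theta \colon C \to \PP^1$; hence the embedding $C \hookrightarrow \Grassmannian{2}{6}$ factors through $\phi_\theta$ --- impossible. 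The tensor case is handled by the same mechanism, using the two pencils underlying the decomposition. With the splitting case eliminated, $\MyNormalBundle = \wedgesc$ is stable, which finishes class~(a).

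Finally, class~(b): a $\mathfrak g^2_7$, say $A$, is then birational onto its image, so $C$ has a plane model of degree $7$ with $7$ nodes, and blowing these up produces a degree-$2$ del~Pezzo surface $S = \Bl_7 \PP^2$ on which $C \in |7H - 2\sum E_i|$, with $(4H - \sum E_i)|_C = \omega_C$ and $|{-}K_S + H| = |4H - \sum E_i|$ embedding $S \subset \PP^7$ so that $C$ lies there canonically. From $C \subset S \subset \PP^7$ one has $0 \to \NormalBundle_{C/S} \to \MyNormalBundle \to \NormalBundle_{S/\PP^7}|_C \to 0$, where $\NormalBundle_{C/S} = \mathcal O_S(C)|_C$ is a line subbundle of degree $C^2 = 21$, equal to the slope of $\MyNormalBundle$; hence $\MyNormalBundle$ is not stable. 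Polystability then follows by checking that this sequence splits and that both pieces are polystable of slope $21$: $\NormalBundle_{S/\PP^7}|_C$ is the restriction of a semistable bundle on the del~Pezzo surface, and the symmetry interchanging $A$ with $\omega_C \otimes A^{-1}$ --- equivalently the two pencils $|H|$ and $|{-}K_S|$ on $S$ --- forces the extension class to vanish. This disposes of all three classes and hence proves the theorem; the step I expect to be the real obstacle is the exclusion of the symplectic and tensor degenerations in class~(a), everything else being either classical or a cohomology computation.
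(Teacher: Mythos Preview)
Your overall architecture matches the paper's, but the execution diverges in both substantive cases, and in each there is a genuine gap. First a normalization slip that propagates through class~(a): one has $\NormalBundle_{G/\PP^{14}} = \bigwedge^2 S^\vee \otimes \mathcal{O}_G(1)$, not $\bigwedge^2 S^\vee$, so $\det S_C^\vee = \omega_C$ and $\mu(S_C^\vee)=7/2$; the object to study is $\MyNormalBundle(-1)=\wedgesc$ of slope $7$. (Your parity arguments survive this.) Your stability proof for $S_C^\vee$ is too thin: the rank-$3$ quotient case does \emph{not} reduce to a Clifford bound --- the paper needs the explicit filtration $0\to Q_\eta\to S_C^\vee\to\zeta\to 0$ for $\zeta\in W^1_5(C)$, the stability of $Q_\eta$, and the fact that $E_C$ is indecomposable to close it off. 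Your attack on indecomposability of $\wedgesc$ via the isogeny $\SL_4\to\SO_6$ is elegant and genuinely different from the paper, which instead pushes every hypothetical summand through the sequence $0\to Q_\eta^\vee\otimes\eta\to\wedgesc\to Q_\eta\otimes\zeta\to 0$. Your isotropic cases and the orthogonal splittings of rank $1$ and $2$ go through; the radical argument for the symplectic case is nice. But the rank-$3$ orthogonal splitting is a real hole: a decomposition $S_C^\vee\cong V\otimes V'$ into stable rank-$2$ bundles produces no ``two pencils'', and the radical mechanism does not apply --- there is no $L$-valued $2$-form on $S_C^\vee$ whose kernel recovers the Mukai point. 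You still owe an argument that $S_C^\vee$ is not such a tensor product, and nothing you have written supplies one.

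Class~(b) is where the proposal is weakest. Your del Pezzo does give a line subbundle of $\MyNormalBundle$ of slope $21$, so ``not stable'' is fine, but polystability is essentially unproven. The plane septic need not be nodal, so the construction does not cover the whole locus. There is no automorphism of $\Bl_7\PP^2$ exchanging $|H|$ and $|{-}K_S|$ (their self-intersections are $1$ and $2$), so the claimed symmetry cannot force the extension class to vanish. And ``restriction of a semistable bundle on $S$'' gives neither semistability nor polystability on $C$ in general. The paper takes a different route: it uses both $\mathfrak g^2_7$'s simultaneously via the Ide--Mukai embedding $C\hookrightarrow\PP^2\times\PP^2$, obtains the explicit decomposition $\MyNormalBundle(-1)\cong\alpha\oplus(Q_\alpha\otimes Q_\beta)\oplus\beta$, and then proves directly that $Q_\alpha\otimes Q_\beta$ is stable. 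Your one-$\mathfrak g^2_7$ surface only sees the summand $\omega_C\otimes\beta$ and leaves the rank-$5$ complement untouched.
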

Note that a tetragonal curve always has a $\mathfrak{g}^2_7$ \autocite[Lemma 3.8]{Mukai1993}.

We sketch the proof of Theorem \ref{thm:main-theorem}.
The first ingredient is a construction of S.\@ Mukai, found in \autocite{Mukai1993},
showing that the general genus $8$ curve is a transversal linear section of a
Grassmannian $G = \Grassmannian{2}{6}$ in its Plücker embedding.
The normal bundle of $C$ then arises as the pullback of the normal bundle of $G$,
which has a description in terms of the tautological bundle $\TautBundle$ on $G$.
More precisely, we have $\NormalBundle_{C/\PP^7}(-1) = \ExplicitNormalBundle$,
where $\TautBundle_C$ is the restriction of $\TautBundle$ to $C$.

In Section \ref{subsec:stability-tautological-bundle} we then prove
that $\TautBundle_C$ is stable.  Here we again use essential properties
of the Mukai embedding $C \subseteq G$ to describe the geometry of $S_C$.
Since exterior powers of stable bundles are polystable, we are left with proving
that $\NormalBundle_{C/\PP^7}$ does not decompose.  We achieve this in
Section \ref{subsec:stability-normal-bundle-g8} through a case-by-case analysis.

If $W^2_7(C) \not= \emptyset$ but the curve is not tetragonal,
we show in Section \ref{sec:semistability-g27} that the normal bundle is not stable,
but still polystable.  The essential ingredient is the existence of complete
intersection models of these curves in $\PP^2\times \PP^2$,
constructed by M.~Ide and S.~Mukai in \autocite{MI2003}.  Interestingly,
for a suitable limit definition of $S_C$, the description of $\NormalBundle_{C/\PP^7}(-1)$
as $\ExplicitNormalBundle$ still holds.

Finally, in two appendices, we first reprove the result that no canonical genus $6$ curve
has stable normal bundle using Mukai's Grassmannian construction.
Secondly, we give some remarks about the case which comes next,
i.e., genus $9$ canonical curves, which seems to be considerably more difficult
and might require new techniques.
\begin{remark}
  For more general linear systems than the canonical one,
  not much is known either.
  On the one hand, embeddings of low degree,
  as well as certain embeddings by non-complete linear series, are expected to be unstable.
  Normal bundles of rational and elliptic space curves have been studied
  and shown to be unstable in many cases, for instance in \autocite{EvdV1982, HS1983}.
  
  On the other hand, at least conjecturally, embeddings by complete linear systems
  of high degree are expected to be stable. 
  In \autocite{EL1992}, Ein and Lazarsfeld prove that normal bundles of elliptic curves
  in an embedding of degree at least $3$ are polystable and they show that for genus $g \geq 2$
  and degree $d \gg 0$ the normal bundles are at least not too unstable.
\end{remark}
\begin{remark}
  The question about stability properties of normal bundles can also
  be posed for polarized K3 surfaces.
  There is an intimate relation between the geometry of K3s and the geometry of canonical curves.
  In particular, Mukai's constructions originally were set up for Fano $3$-folds
  and K3 surfaces, with curves as an application (see \autocite{Mukai1988}).
  Using this, we can almost immediately lift results about normal bundles up to genus $9$ from
  a canonical curve $C$ to a polarized K3 surface $(S, \mathcal{O}_S(C))$.
  It may be worth investigating the opposite direction:  consider stability
  of normal bundles of polarized K3 surfaces and try to infer something
  about the normal bundle of their hyperplane sections.
\end{remark}



\paragraph{Acknowledgements}

The content of this article was part of my Ph.D.\@ thesis.
First I would like to thank my Ph.D.\@ advisor Gavril Farkas
for suggesting this very interesting problem to me.
Thanks also goes to Carel Faber, my co-advisor, who hosted
me at Universiteit Utrecht during the time this paper
was conceived.  I greatly enjoyed the atmosphere and guidance there.
The research stay was generously supported by the IRTG 1800
of the Deutsche Forschungsgemeinschaft.

\section{Stability of the normal bundle for general curves}

\subsection{Facts about normal bundles of canonical curves}

Consider a non-hyperelliptic smooth curve of genus $g \geq 3$,
embedded by the canonical embedding in $\PP^{g-1}$.
We denote the normal bundle of this embedding by $\NormalBundle_{C/\PP^{g-1}}$.
Using the Euler exact sequence and the normal bundle exact sequence
we calculate that in this case
\begin{equation*}
  \det( \NormalBundle_{C/\PP^{g-1}} ) = \omega_C^{\otimes (g+1)},\quad
  \deg( \NormalBundle_{C/\PP^{g-1}} ) = (g+1)(2g - 2)
\end{equation*}
and we remark that $\NormalBundle_{C/\PP^{g-1}}$ is of rank $g - 2$.

To discuss stability properties of $\NormalBundle_{C/\PP^{g-1}}$
in a more precise way, recall the following definition:
\begin{definition}
  A vector bundle on a curve $C$ is called \emph{polystable}
  if it splits into the direct sum of stable bundles,
  all of the same slope.  
\end{definition}
Of course every stable bundle is polystable and polystable bundles are semistable.

We now consider, as an easy example,
the stability of the normal bundle of canonical curves in low genus,
briefly mentioned already in the introduction.

Every non-hyperelliptic genus $3$
curve is canonically embedded as a quartic in $\PP^2$, hence the normal
bundle is the line bundle $\mathcal{O}_C(4)$ and therefore stable.
For genus $4$ and $5$,
the general canonical curve is the complete intersection of
a cubic and a quadric, or three quadrics, respectively.  Since the
normal bundle of a complete intersection splits, we get
\begin{equation*}
  \NormalBundle_{C/\PP^{g-1}} = 
  \begin{cases}
    \mathcal{O}_C(2) \oplus \mathcal{O}_C(3), & g = 4\\
    \mathcal{O}_C(2) \oplus \mathcal{O}_C(2) \oplus \mathcal{O}_C(2), & g = 5.
  \end{cases}
\end{equation*}
We see that for genus $4$ the normal bundle is unstable
while it is polystable (but not stable) for genus $5$.

\subsection{Kernel bundles}
\label{subsec:kernel-bundles}

Now we introduce the main technical tool we need.
If $L$ is a line bundle on $C$ then we denote by $M_L$
the kernel of the evaluation morphism
\begin{equation}
  \label{eq:kernel-bundle-definition}
  H^0(C, L) \otimes \mathcal{O}_C \rightarrow L.
\end{equation}
As a subsheaf of the (free) vector bundle $H^0(C, L) \otimes \mathcal{O}_C$,
the sheaf $M_L$ is itself a vector bundle
and is called the \emph{kernel bundle} of $L$.
We denote its dual
by $Q_L = M_L^\vee$.  If $L$ is globally generated, then the evaluation map 
\eqref{eq:kernel-bundle-definition} is surjective and the sequence
\begin{equation*}
  0 \rightarrow M_L \rightarrow H^0(C, L) \otimes \mathcal{O}_C \rightarrow L \rightarrow 0
\end{equation*}
is also exact on the right.  We obtain the dual short exact sequence
\begin{equation*}
  0 \rightarrow L^{-1} \rightarrow H^0(C, L)^\vee \otimes \mathcal{O}_C \rightarrow Q_L \rightarrow 0
\end{equation*}
which shows that $Q_L$ is globally generated.  Taking global sections in this exact
sequence also shows $h^0(C, Q_L) \geq h^0(C, L)$ and we expect equality to hold.
We furthermore have that $\rk(Q_L) = h^0(C, L) - 1$ and $\det Q_L = L$.

The bundles $Q_L$ are often stable and in many cases we have a natural
isomorphism $H^0(C, L)^\vee \cong H^0(C, Q_L)$.
In this way, kernel bundles provide many examples of stable bundles
with unusually many global sections.  
They have been used to construct vector bundles with special
properties on curves and surfaces, e.g.\@ Ulrich bundles (\autocite{ES2003}),
and bundles violating Mercat's conjecture (\autocite{FO2012}).

Kernel bundles have also been used extensively in the context of Koszul cohomology,
for instance in construction of Koszul divisors on moduli
spaces of curves (see \autocite{Farkas2009}).
R.\@ Lazarsfeld used them to show that a general curve on a general K3 surface
satisfies the Brill--Noether--Petri theorem \autocite{Lazarsfeld1986}.
C.\@ Voisin made use of Kernel bundles in \autocite{Voisin2002, Voisin2005} to
prove Green's conjecture for generic curves of even and odd genus, respectively.

\subsection{Description of the normal bundle}
\label{subsec:normalbundle-g8-description}

Throughout the rest of this section we will assume that $C$ is a curve of genus $g = 8$,
canonically embedded in $\PP^7$, such that $W^2_7(C) = \emptyset$.
In other words, $C$ should be Brill--Noether general.
By the work of Mukai (\autocite{Mukai1993}), such a curve $C$ is a transversal
linear section of a Grassmannian $\MyGrassmannian = \Grassmannian{2}{6}$, embedded
by the Plücker embedding in $\PP^{14}$.  More precisely, there is a
$7$-plane $\PP^7 \subset \PP^{14}$ such that $C = \MyGrassmannian \cap \PP^7$.

The inclusion $C \hookrightarrow \MyGrassmannian$ is induced by the global
sections of the \emph{Mukai bundle} $E_C$ on $C$, an up to isomorphism
uniquely defined stable rank $2$ bundle with $h^0(C, E_C) = 6$.
If $\zeta$ is any $\mathfrak{g}^1_5$ on $C$ and $\eta = \omega_C \otimes \zeta^{-1} \in W^3_9(C)$
its Serre dual, then $E_C$ sits in the exact sequence
\begin{equation*}
  0 \rightarrow \zeta \rightarrow E_C \rightarrow \eta \rightarrow 0
\end{equation*}
which is split on global sections.

The normal bundle $\NormalBundle_{C/\PP^7}$
has rank $6$ and determinant $\omega_C^{\otimes 9}$.
We can use the Grassmannian embedding of $C$ to derive more
information about the structure of $\NormalBundle_{C/\PP^7}$.
Because $C = \PP^7 \cap \MyGrassmannian$,
we have the split exact sequence
\begin{equation*}
  0 \rightarrow \mathcal{N}_{C/\MyGrassmannian} \rightarrow \mathcal{N}_{C/\PP^{14}}
  \rightarrow \mathcal{N}_{C/\PP^7} \rightarrow 0 
\end{equation*}
and the exact sequence of normal bundles induced by the inclusions
$C \hookrightarrow \MyGrassmannian \hookrightarrow \PP^{14}$
\begin{equation*}
  0 \rightarrow \mathcal{N}_{C/\MyGrassmannian} \rightarrow \mathcal{N}_{C/\PP^{14}}
  \rightarrow \mathcal{N}_{\MyGrassmannian/\PP^{14}}|_C \rightarrow 0
\end{equation*}
which together imply $\NormalBundle_{C/\PP^7} = \NormalBundle_{\MyGrassmannian/\PP^{14}}|_C$.
But the normal bundle of a $\Grassmannian{2}{n}$ in its Plücker embedding
is explicitly given by
\begin{equation*}
  \NormalBundle_{\Grassmannian{2}{n}/\PP^N}
  = \bigwedge\nolimits^2 S^\vee \otimes \bigwedge\nolimits^2 Q
  = \bigwedge\nolimits^2 S^\vee \otimes \mathcal{O}_{\Grassmannian{2}{n}}(1).
\end{equation*}
A proof of this classical fact can be found, e.g., in \autocite{Manivel1998}.
Here $\TautBundle$ is the tautological bundle over $\Grassmannian{2}{n}$
and $\UnivQuotBundle$ the universal quotient bundle of rank $2$,
sitting in the exact sequence
$0 \rightarrow S \rightarrow \mathcal{O}_{\Grassmannian{2}{n}}^{\oplus n} \rightarrow Q \rightarrow 0$.
Hence if $\TautBundle_C$ denotes the restriction of $\TautBundle$ to $C$ then
\begin{equation*}
  \NormalBundle_{C/\PP^7}(-1) = \ExplicitNormalBundle.
\end{equation*}
In what follows we will prove stability of
this twist $\NormalBundle_{C/\PP^7}(-1)$.  Note that
$\det \NormalBundle_{C/\PP^7}(-1) = \omega_C^{\otimes 3}$
and the slope of this bundle is $\mu(\NormalBundle_{C/\PP^7}(-1)) = 7$.
\begin{remark}
  Several things are special about the case of genus $8$.
  In fact, this is the only genus $g \geq 6$ where the slope of the normal bundle is an integer.
  This happens if and only if
  \begin{equation*}
    (g-2) \mid 2(g-1)(g+1)
  \end{equation*}
  and since $(g-2)$ and $(g-1)$ have no common factors, we must have
  $(g-2) \mid 2(g+1)$ which implies that all divisors of $(g-2)$ are $1$, $2$ or $3$.
  So the only possibilities are $g = 3,4,5,8$.

  By the above explicit description of $\NormalBundle_{C/\PP^7}$ we also see that
  the normal bundle of a general canonical genus $8$ curve is self-dual up to twist,
  since for any vector bundle $\mathcal{F}$ of rank $4$ we have the duality
  \begin{equation*}
    \bigwedge\nolimits^2 \mathcal{F}
    = \Big( \bigwedge\nolimits^2 \mathcal{F} \Big)^\vee \otimes \det \mathcal{F}
  \end{equation*}
  and hence
  \begin{equation*}
    \MyNormalBundle^\vee(2) \cong \MyNormalBundle(-1).
  \end{equation*}
  By comparing the degrees of $\NormalBundle_{C/\PP^{g-1}}(k)$ and $\NormalBundle_{C/\PP^{g-1}}^\vee(l)$
  for $k,l\in \Z$ and arbitrary genus, we see that $g = 8$ is the only case
  where this could happen.
\end{remark}
Our strategy is then to derive
the stability of $\NormalBundle_{C/\PP^7}(-1)$ from the stability of $\TautBundle_C$.
Hence we first have to understand the bundle $\TautBundle_C$ better.
First of all noting some basic numerical facts, the slope is $\mu(S_C^\vee) = 7/2$.
We can restrict the universal exact
sequence over the Grassmannian $\Grassmannian{2}{6}$
to $C$ and obtain
\begin{equation*}
  0 \rightarrow S_C \rightarrow \mathcal{O}_C^{\oplus 6} \rightarrow E_C \rightarrow 0
\end{equation*}
where $E_C$ denotes, as before, the Mukai bundle of $C$.
This implies that $S_C^\vee$ is globally generated and $h^0(C, S_C^\vee) \geq 6$.
We also have $\det(S_C^\vee) = \omega_C$ and $\chi(S_C^\vee) = -14$,
hence $h^0(S_C^\vee) = h^1(S_C^\vee) - 14$.

\subsection{Stability of the tautological bundle}
\label{subsec:stability-tautological-bundle}

We are going to prove that the restriction of the tautological bundle $\TautBundle$ of $G(2,6)$
to $C$ is stable.
The following lemma will be instrumental in what follows.
\begin{lemma}
  \label{lem:morph-to-g15}
  Let $\zeta \in W^1_5(C)$.  Then there is a unique surjection $S_C^\vee \rightarrow \zeta$.
  This gives rise to an exact sequence
  \begin{equation*}
    0 \rightarrow Q_\eta \rightarrow S_C^\vee \rightarrow \zeta \rightarrow 0
  \end{equation*}
  where $Q_\eta$ is the dual of the kernel bundle of $\eta = \omega_C \otimes \zeta^{-1}$.
\end{lemma}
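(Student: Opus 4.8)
The plan is to build the asserted sequence by a diagram chase on the two exact sequences already at hand, and then to extract uniqueness from a short $\Hom$ computation. First, some preliminaries I will use throughout. Since $W^2_7(C)=\emptyset$ we also have $W^2_6(C)=\emptyset$, because adding a base point to a $\mathfrak{g}^2_6$ gives a $\mathfrak{g}^2_7$; consequently $\zeta$ is base-point-free — otherwise $C$ would be tetragonal and would carry a $\mathfrak{g}^2_7$ by \autocite[Lemma 3.8]{Mukai1993} — and $\eta=\omega_C\otimes\zeta^{-1}$ is base-point-free as well, since a base point $p$ of $\eta$ would force $h^0(\zeta\otimes\mathcal{O}_C(p))=3$, a $\mathfrak{g}^2_6$.

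For existence, I would argue as follows. Write $\mathcal{O}_C^{\oplus 6}=H^0(C,E_C)\otimes\mathcal{O}_C$ and let $\mathrm{ev}\colon H^0(C,E_C)\otimes\mathcal{O}_C\twoheadrightarrow E_C$ be the evaluation, with kernel $S_C$. Compose $\mathrm{ev}$ with the quotient $E_C\twoheadrightarrow\eta$ coming from the Mukai sequence. Because $\eta$ is globally generated and $H^0(E_C)\to H^0(\eta)$ is onto, the composite $H^0(E_C)\otimes\mathcal{O}_C\to\eta$ is surjective, and under the decomposition $H^0(E_C)=H^0(\zeta)\oplus H^0(\eta)$ given by the splitting on global sections it vanishes on $H^0(\zeta)\otimes\mathcal{O}_C$ and agrees with the evaluation map of $\eta$ on $H^0(\eta)\otimes\mathcal{O}_C$. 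Hence its kernel is $H^0(\zeta)\otimes\mathcal{O}_C\oplus M_\eta$, and the elementary snake argument applied to $\mathrm{ev}$ and to $E_C\twoheadrightarrow\eta$ produces an exact sequence
\begin{equation*}
  0 \longrightarrow S_C \longrightarrow H^0(\zeta)\otimes\mathcal{O}_C\oplus M_\eta \longrightarrow \zeta \longrightarrow 0
\end{equation*}
whose restriction to the summand $H^0(\zeta)\otimes\mathcal{O}_C$ is the (surjective) evaluation map of $\zeta$. Therefore the projection $S_C\to M_\eta$ is surjective, with kernel $\ker(H^0(\zeta)\otimes\mathcal{O}_C\to\zeta)=M_\zeta$, a line bundle of degree $-5$, hence $M_\zeta=\zeta^{-1}$. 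Dualizing $0\to\zeta^{-1}\to S_C\to M_\eta\to 0$ gives $0\to Q_\eta\to S_C^\vee\to\zeta\to 0$; in particular we obtain a surjection $S_C^\vee\to\zeta$ with the stated kernel.

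For uniqueness, note that a map $S_C^\vee\to\zeta$ is the same as a map $\zeta^{-1}\to S_C$, i.e.\ an element of $H^0(S_C\otimes\zeta)$. Composing with $S_C\to M_\eta$ and using
\begin{equation*}
  H^0(M_\eta\otimes\zeta)=\ker\bigl(H^0(\eta)\otimes H^0(\zeta)\to H^0(\omega_C)\bigr)\cong H^0(\omega_C\otimes\zeta^{\otimes-2}),
\end{equation*}
where the last isomorphism is the base-point-free pencil trick for $\zeta$, one gets $\dim\Hom(S_C^\vee,\zeta)\le 1+h^0(\omega_C\otimes\zeta^{\otimes-2})$, the ``$1$'' being accounted for by the inclusion $\zeta^{-1}\hookrightarrow S_C$ built above. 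So it suffices to prove $h^0(\omega_C\otimes\zeta^{\otimes-2})=0$, equivalently $h^0(\zeta^{\otimes2})=3$: then every nonzero map $\zeta^{-1}\to S_C$ is a scalar multiple of the chosen inclusion, hence a subbundle inclusion, and dually every nonzero map $S_C^\vee\to\zeta$ is a scalar multiple of our surjection.

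The vanishing $h^0(\omega_C\otimes\zeta^{\otimes-2})=0$ is the step that genuinely uses $W^2_7(C)=\emptyset$, and I expect it to be the main obstacle. I would argue by contradiction: suppose $D$ is effective with $\mathcal{O}_C(D)=\omega_C\otimes\zeta^{\otimes-2}$, so $\deg D=4$. If $h^0(D)\ge2$ then $D$ is a $\mathfrak{g}^1_4$, $C$ is tetragonal and carries a $\mathfrak{g}^2_7$ by \autocite[Lemma 3.8]{Mukai1993}, a contradiction; hence $h^0(D)=1$. Now $\eta=\zeta\otimes\mathcal{O}_C(D)$ satisfies $h^0(\eta)=4$ while $h^0(\zeta)=2$, and one analyses the jumps of $h^0(\zeta\otimes\mathcal{O}_C(D'))$ over the subdivisors $0\le D'\le D$: any such value equal to $3$ with $\deg D'\le 2$ is already a $\mathfrak{g}^2_7$, and tracking the base loci of the residual systems $|\eta-D'|$ forces, in the remaining configurations, a $\mathfrak{g}^2_7$ on $C$ as well. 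Either way $W^2_7(C)\ne\emptyset$, contrary to the standing hypothesis, which proves the vanishing and completes the lemma.
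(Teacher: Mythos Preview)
Your existence argument is correct and is essentially the same snake-lemma diagram chase the paper carries out (the paper draws the $3\times 3$ diagram explicitly, you compute the kernel of the composite $H^0(E_C)\otimes\mathcal O_C\to E_C\to\eta$ and project; these are equivalent). The identification of the kernel with $Q_\eta$ is fine.

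The gap is in uniqueness. You correctly reduce to the Petri-type vanishing $h^0(\omega_C\otimes\zeta^{-2})=0$, but your case analysis does not establish it. In the ``remaining configuration'' --- $h^0(\zeta+D')=2$ for every $D'\le D$ with $\deg D'\le 2$ --- one checks that each $|\eta-p_i|$ is a base-point-free $\mathfrak g^2_8$, while for $\deg D'\ge 2$ the base locus of $|\eta-D'|$ is exactly $D-D'$: no $\mathfrak g^2_7$ is produced by ``tracking base loci''. Geometrically this configuration just says that the four points of $D$ lie on a quadrisecant line of the $|\eta|$-model in $\PP^3$, which is entirely compatible with $W^2_7(C)=\emptyset$ (such quadrisecants always exist; the paper itself uses one in Lemma~\ref{lem:stability-kernel-g39}). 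So the sketched step is not a proof. What you are really asserting is that $W^2_7(C)=\emptyset$ forces $W^1_5(C)$ to be reduced at $\zeta$; this is true, but it needs Mukai's analysis rather than a direct divisor chase.

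The paper sidesteps this entirely. Instead of using the sequence you built, it tensors the tautological sequence $0\to S_C\to H^0(E_C)\otimes\mathcal O_C\to E_C\to 0$ by $\zeta$ and applies the base-point-free pencil trick to $E_C$, obtaining an \emph{equality}
\[
  \Hom(S_C^\vee,\zeta)\;=\;H^0(S_C\otimes\zeta)\;=\;\ker\bigl(H^0(E_C)\otimes H^0(\zeta)\to H^0(E_C\otimes\zeta)\bigr)\;=\;H^0(E_C\otimes\zeta^{-1}),
\]
and then invokes \autocite[Lemma~3.10]{Mukai1993} for $h^0(E_C\otimes\zeta^{-1})=1$. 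Note that this is strictly weaker than your Petri vanishing: from $0\to\mathcal O_C\to E_C\otimes\zeta^{-1}\to\omega_C\otimes\zeta^{-2}\to 0$ one only gets $h^0(E_C\otimes\zeta^{-1})\le 1+h^0(\omega_C\otimes\zeta^{-2})$, so the paper proves exactly what is needed without passing through Petri. If you want to keep your organization, the cleanest fix is to replace your uniqueness computation by this one.
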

\begin{proof}
  The space $H^0(C, E_C \otimes \zeta^{-1}) = H^1(C, E_C \otimes \zeta)^\vee$
  is easily seen to be $1$-dimensional (for instance by Lemma 3.10 of \autocite{Mukai1993}),
  hence $h^0(C,E_C \otimes \zeta) = 11$.  Let $V = H^0(C, E_C)$.  Then by tensoring
  the exact sequence
  \begin{equation*}
    0 \rightarrow S_C \rightarrow V \otimes \mathcal{O}_C
    \rightarrow E_C \rightarrow 0
  \end{equation*}
  with $\zeta$ and taking cohomology we obtain
  \begin{equation*}
    0 \rightarrow H^0(C, S_C \otimes \zeta) \rightarrow V \otimes H^0(C, \zeta)
    \rightarrow H^0(C, E_C \otimes \zeta).
  \end{equation*}
  We want to show that the kernel $H^0(C, S_C \otimes \zeta) = \Hom(S_C^\vee, \zeta)$
  of the multiplication map
  $\mu\colon V \otimes H^0(C, \zeta) \rightarrow H^0(C, E_C \otimes \zeta)$
  is one-dimensional.
  Using the base point free pencil trick we write
  \begin{equation*}
    0 \rightarrow \zeta^{-1} \rightarrow H^0(C, \zeta) \otimes \mathcal{O}_C \rightarrow \zeta
    \rightarrow 0
  \end{equation*}
  and tensor this by $E_C$.  Taking cohomology, we see that the kernel of $\mu$
  is exactly $H^0(C, E_C \otimes \zeta^{-1})$, i.e., $\dim(\ker(\mu)) = 1$.
  
  This means there is a unique nonzero morphism
  $S_C^\vee \rightarrow \zeta$ which surjects onto a line bundle
  of degree $d \leq \deg(\zeta) = 5$.  This line bundle,
  as a quotient of $S_C^\vee$, must be globally generated and since $W^1_4(C) = \emptyset$
  it must be $\zeta$ itself.  We get an exact sequence
  \begin{equation*}
    0 \rightarrow F \rightarrow S_C^\vee \rightarrow \zeta \rightarrow 0
  \end{equation*}
  where $F$ is a rank $3$ vector bundle with $\det(F) = \eta$, the Serre dual of $\zeta$.
  Because $H^0(C, E_C) = H^0(C, \zeta) \oplus H^0(C, \eta)$ we get
  the following commutative diagram with exact columns and the upper two rows also exact:
  \begin{center}
    \begin{tikzpicture}
      \matrix[dmatrix] (m)
      {
        & 0 & 0 & 0 & \\
        0 & \zeta^{-1} & H^0(C,\zeta)\otimes\mathcal{O}_C & \zeta & 0\\
        0 & S_C & H^0(C, E) \otimes \mathcal{O}_C & E_C & 0\\
        0 & F^\vee & H^0(C,\eta) \otimes \mathcal{O}_C & \eta & 0\\
        & 0 & 0 & 0 & \\
      };
      \draw[->] (m-2-1) to (m-2-2);
      \draw[->] (m-2-2) to (m-2-3);
      \draw[->] (m-2-3) to (m-2-4);
      \draw[->] (m-2-4) to (m-2-5);
      \draw[->] (m-3-1) to (m-3-2);
      \draw[->] (m-3-2) to (m-3-3);
      \draw[->] (m-3-3) to (m-3-4);
      \draw[->] (m-3-4) to (m-3-5);
      \draw[->] (m-4-1) to (m-4-2);
      \draw[->] (m-4-2) to (m-4-3);
      \draw[->] (m-4-3) to (m-4-4);
      \draw[->] (m-4-4) to (m-4-5);

      \draw[->] (m-1-2) to (m-2-2);
      \draw[->] (m-2-2) to (m-3-2);
      \draw[->] (m-3-2) to (m-4-2);
      \draw[->] (m-4-2) to (m-5-2);
      \draw[->] (m-1-3) to (m-2-3);
      \draw[->] (m-2-3) to (m-3-3);
      \draw[->] (m-3-3) to (m-4-3);
      \draw[->] (m-4-3) to (m-5-3);
      \draw[->] (m-1-4) to (m-2-4);
      \draw[->] (m-2-4) to (m-3-4);
      \draw[->] (m-3-4) to (m-4-4);
      \draw[->] (m-4-4) to (m-5-4);
    \end{tikzpicture}
  \end{center}
  The Snake Lemma implies that the last row is exact as well,
  i.e., $F^\vee$ is actually $M_\eta$.
\end{proof}
\begin{remark}
  This shows in particular that $h^0(C, S_C^\vee) = 6$, since from the exact sequence
  $0 \rightarrow Q_\eta \rightarrow S_C^\vee \rightarrow \zeta \rightarrow 0$
  we get the bound
  \begin{equation*}
    h^0(C, S_C^\vee) \leq h^0(C, \zeta) + h^0(C, Q_\eta) = 2 + 4
  \end{equation*}
  and we already knew $h^0(C, S_C^\vee) \geq 6$ from before.
\end{remark}
The bundle $Q_\eta$ that appears in Lemma \ref{lem:morph-to-g15} plays an important
role in understanding the stability of $S_C^\vee$.  It is not too hard to show
that $Q_\eta$ is itself stable.
\begin{lemma}
  \label{lem:stability-kernel-g39}
  Let $\eta \in W^3_9(C)$ 
  and let $Q_{\eta}$ be the dual of the kernel bundle of $\eta$.
  Then $Q_{\eta}$ is stable and $H^0(C, Q_{\eta}) = H^0(C, \eta)^\vee$.
\end{lemma}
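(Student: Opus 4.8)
The plan is to establish the two assertions in turn, using throughout that the hypothesis $W^2_7(C)=\emptyset$ forces $C$ to have gonality exactly $5$ (every genus $8$ curve has a $\mathfrak{g}^1_5$, and every tetragonal curve has a $\mathfrak{g}^2_7$) and $W^2_d(C)=\emptyset$ for all $d\le 7$. I will freely use that $Q_\eta$ has rank $3$, $\det Q_\eta=\eta$ and $\mu(Q_\eta)=3$, that dualizing the evaluation sequence of $\eta$ gives
\begin{equation*}
  0\to\eta^{-1}\to H^0(\eta)^\vee\otimes\mathcal{O}_C\to Q_\eta\to 0
\end{equation*}
(so $Q_\eta$ is globally generated and $H^0(\eta)^\vee\to H^0(Q_\eta)$ is injective), and that $\Hom(Q_\eta,\mathcal{O}_C)=H^0(M_\eta)=0$, the latter being immediate from the evaluation sequence of $\eta$.

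First I would pin down the space of sections. Applying Lemma \ref{lem:morph-to-g15} with $\zeta=\omega_C\otimes\eta^{-1}\in W^1_5(C)$ gives $0\to Q_\eta\to S_C^\vee\to\zeta\to 0$. Dualizing $0\to S_C\to\mathcal{O}_C^{\oplus 6}\to E_C\to 0$ yields a surjection $\mathcal{O}_C^{\oplus 6}\twoheadrightarrow S_C^\vee$, and since $h^0(E_C^\vee)=0$ (stability of $E_C$) while $h^0(S_C^\vee)=6$, the map $H^0(\mathcal{O}_C^{\oplus 6})\to H^0(S_C^\vee)$ is an isomorphism. The composite $\mathcal{O}_C^{\oplus 6}\to S_C^\vee\to\zeta$ is surjective, so its six defining sections generate $\zeta$; since no one-dimensional subspace of the two-dimensional $H^0(\zeta)$ is base-point free, they must span $H^0(\zeta)$, i.e.\ $H^0(S_C^\vee)\to H^0(\zeta)$ is onto. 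Hence $h^0(Q_\eta)=6-2=4$, and with the injection $H^0(\eta)^\vee\hookrightarrow H^0(Q_\eta)$ this proves $H^0(Q_\eta)\cong H^0(\eta)^\vee$, the second assertion.

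Next I would record the geometric input that drives stability. From $W^2_7(C)=\emptyset$ one sees $\eta$ is very ample: it is base-point free (a base point $p$ would make each $\eta(-p-q)$ a $\mathfrak{g}^2_7$), and $h^0(\eta(-Z))=2$ for every length-$2$ subscheme $Z$ (it has degree $7$ and at least $h^0(\eta)-2$ sections). So $\phi_\eta\colon C\hookrightarrow\PP(H^0(\eta)^\vee)=\PP^3$ is a closed embedding, and comparing the displayed sequence with the Euler sequence $0\to\mathcal{O}(-1)\to H^0(\eta)^\vee\otimes\mathcal{O}\to\mathcal{Q}\to 0$ on $\PP^3$ identifies $Q_\eta\cong\phi_\eta^*\mathcal{Q}$ with the pullback of the universal rank-$3$ quotient. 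Under $H^0(Q_\eta)=H^0(\eta)^\vee=H^0(\PP^3,\mathcal{Q})$, the section of $Q_\eta$ attached to $0\neq v$ is $\phi_\eta^*$ of the section of $\mathcal{Q}$ that vanishes at the single point $[v]$, hence its zero scheme is $\phi_\eta^{-1}([v])$, of length $\le 1$. In particular, if $F\subseteq Q_\eta$ is a line subbundle with $H^0(F)\neq 0$, then a nonzero $\alpha\in H^0(F)$, viewed in $Q_\eta$, still vanishes exactly on its zero divisor (of degree $\deg F$, since $F$ is a subbundle), forcing $\deg F\le 1$.

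Finally I would run the stability check. Let $F\subseteq Q_\eta$ be proper and nonzero. If $\rk F=2$, then $N:=Q_\eta/F$ is a globally generated line bundle, so it has degree $0$ or $\ge\gon(C)=5$; $N\cong\mathcal{O}_C$ is excluded because it would give a nonzero element of $\Hom(Q_\eta,\mathcal{O}_C)=H^0(M_\eta)=0$, so $\deg F=9-\deg N\le 4<2\mu(Q_\eta)$. If $\rk F=1$, put $f=\deg F$ and $\mathcal{F}=Q_\eta/F$, a globally generated rank-$2$ bundle, and suppose $f\ge 2$: a general section of $\mathcal{F}$ is nowhere zero, giving $0\to\mathcal{O}_C\to\mathcal{F}\to\eta\otimes F^{-1}\to 0$ with $\deg(\eta\otimes F^{-1})=9-f\le 7$, so $h^0(\mathcal{F})\le 1+h^0(\eta\otimes F^{-1})\le 1+2=3$ using $W^2_{9-f}(C)\subseteq W^2_7(C)=\emptyset$. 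Then $h^0(F)\ge h^0(Q_\eta)-h^0(\mathcal{F})\ge 4-3=1$, so by the previous paragraph $f\le 1$, a contradiction; hence $f\le 1<\mu(Q_\eta)$. Thus every proper subbundle has slope $<\mu(Q_\eta)$ and $Q_\eta$ is stable. I expect the genuine obstacle to be the geometric input of the third paragraph — pinning down (via a short local computation with the immersion $\phi_\eta$) that a nonzero section of $Q_\eta$ vanishes in length at most $1$, which is exactly where very ampleness of $\eta$, hence $W^2_7(C)=\emptyset$, is used; the rest is bookkeeping with Brill--Noether generality.
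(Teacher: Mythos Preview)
Your argument has a genuine circularity that you should patch. In your second paragraph you invoke $h^0(S_C^\vee)=6$ as an input, but in the paper's logical order this equality is only obtained \emph{after} the present lemma (the Remark following Lemma~\ref{lem:morph-to-g15} uses $h^0(Q_\eta)=4$, which is exactly what you are trying to prove). So you have not actually shown $h^0(Q_\eta)=4$. This then contaminates your third paragraph: the claim ``sections of $Q_\eta$ vanish in length $\le 1$'' is argued only for the sections coming from $H^0(\eta)^\vee$ via the identification $Q_\eta\cong\phi_\eta^*\mathcal{Q}$; without knowing $H^0(Q_\eta)=H^0(\eta)^\vee$ you cannot apply it to the section of an arbitrary line subbundle $F\subseteq Q_\eta$, and your rank-$1$ case of stability collapses.

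The fix is painless and in fact shorter than what you wrote. For a line subbundle $F\subseteq Q_\eta$ with locally free quotient $\mathcal{F}=Q_\eta/F$, note $\Hom(\mathcal{F},\mathcal{O}_C)\hookrightarrow\Hom(Q_\eta,\mathcal{O}_C)=H^0(M_\eta)=0$, so Lemma~\ref{lem:determinant-sections} gives $h^0(\det\mathcal{F})\ge 3$; since $\det\mathcal{F}=\eta\otimes F^{-1}$ and $W^2_7(C)=\emptyset$, this forces $\deg F\le 1$. Together with your (correct) rank-$2$ argument this proves stability directly, using only $h^0(Q_\eta)\ge 4$. Once stability is in hand, $h^0(Q_\eta)=4$ follows either by your surjectivity argument (now non-circular, since $h^0(S_C^\vee)=6$ can be deduced from the lemma), or by the paper's own route: project from a quadrisecant line of $\phi_\eta(C)\subset\PP^3$ to obtain $0\to G\to Q_\eta\to\zeta\to 0$ with $\zeta\in W^1_5(C)$ and $\deg G=4$, and use stability of $Q_\eta$ to bound $h^0(G)\le 2$. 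Note that the paper does not prove stability at all here but simply cites \autocite{EusenSchreyer2012}; your direct argument (once repaired) is a genuinely different, self-contained approach.
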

\begin{proof}
  The stability is proved in \autocite{EusenSchreyer2012}.

  Let $D$ be the intersection of $C$ and a quadrisecant line in
  $\PP^3$ where $C$ is embedded by $|\eta|$.
  Then $\eta(-D) = \zeta \in W^1_5(C)$.  We have a commutative diagram
  \begin{center}
    \begin{tikzpicture}
      \matrix[dmatrix] (m)
      {
        0 & \eta^{-1} & H^0(\eta)^\vee \otimes \mathcal{O}_C & Q_{\eta} & 0\\
        0 & \zeta^{-1} & H^0(\zeta)^\vee \otimes \mathcal{O}_C & \zeta & 0\\
      };
      \draw[->] (m-1-1) -- (m-1-2);
      \draw[->] (m-1-2) -- (m-1-3);
      \draw[->] (m-1-3) -- (m-1-4);
      \draw[->] (m-1-4) -- (m-1-5);
      \draw[->] (m-2-1) -- (m-2-2);
      \draw[->] (m-2-2) -- (m-2-3);
      \draw[->] (m-2-3) -- (m-2-4);
      \draw[->] (m-2-4) -- (m-2-5);
      \draw[->] (m-1-3) -- (m-2-3);
      \draw[->] (m-1-4) -- (m-2-4);
      \draw[right hook->] (m-1-2) -- (m-2-2);
    \end{tikzpicture}
  \end{center}
  and an exact sequence
  \begin{equation*}
    0 \rightarrow G \rightarrow Q_{\eta} \rightarrow \zeta \rightarrow 0
  \end{equation*}
  where $G$ is a rank $2$ bundle with $\det(G) = \mathcal{O}_C(D)$.
  Since $h^0(C, Q_{\eta}) \geq 4$ we have $h^0(C, G) \geq 2$.

  Now $G$ is the extension
  \begin{equation*}
    0 \rightarrow A \rightarrow G \rightarrow B \rightarrow 0
  \end{equation*}
  of two line bundles $A,B$ where we can assume $\deg(A) \geq 0$.
  By stability of $Q_{\eta}$ we have $\deg(A) \leq 2$ and hence $2 \leq \deg(B) \leq 4$.
  In any case, $h^0(C, A) = h^0(C, B) = 1$ and hence $h^0(C, G) = 2$.
  This implies $h^0(C, Q_{\eta}) = 4$.
\end{proof}
The following lemma is well-known.  
\begin{lemma}
  \label{lem:determinant-sections}
  Let $G$ be a globally generated vector bundle on $C$ with $H^0(C, G^\vee) = 0$.
  Then $h^0(C, \det(G)) \geq \rk(G) + 1$ with equality if and only
  if $G$ is the dual of the kernel bundle of $\det(G)$.
\end{lemma}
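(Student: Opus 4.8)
The plan is to manufacture $\rk(G)+1$ linearly independent sections of $\det(G)$ out of a general presentation of $G$, and then examine when that construction is as tight as possible. Write $r=\rk(G)$ and $L=\det(G)$. First note that $h^0(C,G)\ge r+1$: since $G$ is globally generated, the evaluation $H^0(C,G)\otimes\mathcal{O}_C\to G$ is surjective, and if we had $h^0(C,G)=r$ this would be a surjection of vector bundles of the same rank, hence an isomorphism, forcing $G\cong\mathcal{O}_C^{\oplus r}$ and $H^0(C,G^\vee)\ne 0$, contrary to hypothesis.

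Now choose a general $(r+1)$-dimensional subspace $W\subseteq H^0(C,G)$ (if $h^0(C,G)=r+1$ we just take $W=H^0(C,G)$). A standard general-position argument shows that $W$ still globally generates $G$: the locus of $(r+1)$-planes failing to generate $G$ at a fixed point of $C$ has codimension $\ge 2$ in the Grassmannian of such planes, while the points of $C$ form only a one-dimensional family. Thus $W\otimes\mathcal{O}_C\to G$ is surjective, and since $\dim W=r+1=\rk(G)+1$ its kernel is a line bundle whose determinant, by comparison in the sequence $0\to N\to \mathcal{O}_C^{\oplus(r+1)}\to G\to 0$, equals $L^{-1}$. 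We therefore have
\[
  0\to L^{-1}\to W\otimes\mathcal{O}_C\to G\to 0 .
\]
Dualizing (all terms are vector bundles on a curve) gives $0\to G^\vee\to W^\vee\otimes\mathcal{O}_C\to L\to 0$, and taking global sections yields $0\to H^0(C,G^\vee)\to W^\vee\to H^0(C,L)$. Since $H^0(C,G^\vee)=0$, the space $W^\vee$ injects into $H^0(C,L)$, so $h^0(C,\det G)=h^0(C,L)\ge \dim W^\vee=r+1$.

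For the equality clause, suppose $h^0(C,L)=r+1$. Then the injection $W^\vee\hookrightarrow H^0(C,L)$ is an isomorphism, and one checks that under this identification the surjection $W^\vee\otimes\mathcal{O}_C\to L$ from the dual sequence is exactly the evaluation map $H^0(C,L)\otimes\mathcal{O}_C\to L$: by construction the section of $L$ attached to $\phi\in W^\vee$ is $p\mapsto(\text{image of }\phi\text{ in }L_p)$, which is precisely what evaluation records. Hence $G^\vee$ is identified with the kernel $M_L$ of the evaluation map of $L$, i.e. $G\cong M_L^\vee=Q_L$ is the dual of the kernel bundle of $\det(G)$. Conversely, if $G=Q_L$ then $\rk(G)=\rk(Q_L)=h^0(C,L)-1$ gives $h^0(C,\det G)=h^0(C,L)=r+1$ at once (and $Q_L$ does satisfy the hypotheses, being globally generated with $H^0(C,M_L)=\ker\!\big(\mathrm{id}\colon H^0(C,L)\to H^0(C,L)\big)=0$).

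The only genuinely non-formal points are the general-position claim used to pass to a generating subspace of dimension $r+1$, and the compatibility verification in the last paragraph — namely that the abstract isomorphism $W^\vee\cong H^0(C,L)$ intertwines the given surjection with the evaluation map, so that $G^\vee$ is literally the kernel bundle $M_L$. The former is routine Schubert-calculus bookkeeping, and the latter is immediate from the way the sections were produced; everything else is a diagram chase.
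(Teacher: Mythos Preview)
Your proof is correct and follows essentially the same route as the paper: build a presentation $0\to L^{-1}\to \mathcal{O}_C^{\,r+1}\to G\to 0$ from $r+1$ generating sections, dualize, and take global sections using $H^0(C,G^\vee)=0$. You supply considerably more detail than the paper does---in particular the general-position justification for why $r+1$ sections suffice and the verification of the equality clause, both of which the paper leaves implicit in the phrase ``the result follows.''
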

\begin{proof}
  Since $G$ can be generated by $\rk(G) + 1$ sections, we have a surjection
  $\mathcal{O}_C^{\rk(G) + 1} \rightarrow G \rightarrow 0$ whose kernel
  is $\det(G)^{-1}$.  Dualizing this sequence, we get
  \begin{equation*}
    0 \rightarrow G^\vee \rightarrow \mathcal{O}_C^{\rk(G) + 1} \rightarrow \det(G) \rightarrow 0
  \end{equation*}
  and using $H^0(C, G^\vee) = 0$ the result follows.
\end{proof}
This result can be improved, see for instance Proposition 3.3 and Lemma 3.9
in \autocite{PR1987}.  We are now in a position to show that the
tautological bundle is stable.
\begin{proposition}
  $S_C^\vee$ is stable.  
\end{proposition}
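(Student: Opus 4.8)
The plan is to show directly that $S_C^\vee$ (rank $4$, slope $\mu(S_C^\vee)=7/2$) admits no proper locally free quotient $G$ of rank $\rho\in\{1,2,3\}$ with $\mu(G)\le 7/2$; this is equivalent to stability. Two general observations drive the first two cases. First, any such $G$ is globally generated, being a quotient of the globally generated bundle $S_C^\vee$. Second, dualizing $0\to F\to S_C^\vee\to G\to 0$ gives $G^\vee\hookrightarrow S_C$, and the restricted tautological sequence $0\to S_C\to H^0(E_C)\otimes\mathcal{O}_C\to E_C\to 0$ identifies $H^0(E_C)$ with the space of sections, whence $H^0(S_C)=0$ and therefore $H^0(G^\vee)=0$. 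Thus Lemma~\ref{lem:determinant-sections} applies and $h^0(\det G)\ge \rho+1$; if $G$ were destabilizing, $\deg G\le 7\rho/2$, so $\det G$ would be a line bundle of degree at most $3$, $7$, or $10$ (for $\rho=1,2,3$) carrying at least $\rho+1$ sections.

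First I would dispose of $\rho=1$ and $\rho=2$. For $\rho=1$: a globally generated line bundle of degree $\le 3$ must be trivial, since $W^2_7(C)=\emptyset$ makes $C$ non-tetragonal and hence of gonality $5$ by \autocite[Lemma 3.8]{Mukai1993}; but $G\cong\mathcal{O}_C$ dualizes to a nonzero map $\mathcal{O}_C\hookrightarrow S_C$, contradicting $H^0(S_C)=0$. For $\rho=2$: $\det G$ would lie in $W^2_d(C)$ for some $d\le 7$, and adding base points shows that $W^2_{\le 7}(C)=\emptyset$ is equivalent to the hypothesis $W^2_7(C)=\emptyset$ — a contradiction.

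The case $\rho=3$ is the main obstacle and is where the geometry of $S_C$ set up above is essential. Here $M:=\det G$ has $\deg M\le 10$ and $h^0(M)\ge 4$. One first checks $W^3_{\le 8}(C)=\emptyset$ (a $\mathfrak{g}^3_8$ minus a point is a $\mathfrak{g}^2_7$) and $W^4_{\le 10}(C)=\emptyset$ (by Serre duality and Clifford, using $\mathrm{gon}(C)=5$ and $W^2_7(C)=\emptyset$), so $\deg M\in\{9,10\}$ and $h^0(M)=4=\rho+1$; by the equality clause of Lemma~\ref{lem:determinant-sections} the bundle $G$ is then the dual kernel bundle $Q_M$ of $M$, and in particular $M$ is basepoint-free. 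Now fix $\zeta\in W^1_5(C)$ and $\eta=\omega_C\otimes\zeta^{-1}$, and compose the inclusion $F=\ker(S_C^\vee\twoheadrightarrow G)=\omega_C\otimes M^{-1}\hookrightarrow S_C^\vee$ with the unique surjection $S_C^\vee\to\zeta$ of Lemma~\ref{lem:morph-to-g15}. Since $\deg F=14-\deg M\ge 4>3=\mu(Q_\eta)$ and $Q_\eta$ is stable (Lemma~\ref{lem:stability-kernel-g39}), the composite cannot factor through $Q_\eta=\ker(S_C^\vee\to\zeta)$, so it is a nonzero map of line bundles $F\to\zeta$; hence $F=\zeta(-D)$ with $\deg D=\deg M-9\in\{0,1\}$, i.e. $M=\eta$ or $M=\eta(p)$ for a point $p$. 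If $M=\eta(p)$, then $h^0(\eta(p))=4=h^0(\eta)$ by Riemann--Roch, so every section of $\eta(p)$ comes from $\eta$ and vanishes at $p$, i.e. $\eta(p)$ is not basepoint-free — contradicting what was just shown about $M$. If $M=\eta$, then $F=\zeta$, and the composite $\zeta\hookrightarrow S_C^\vee\twoheadrightarrow\zeta$ is nonzero, hence an isomorphism, so $S_C^\vee\cong\zeta\oplus Q_\eta$; but $\zeta$ and $Q_\eta$ are globally generated, so the kernel of the evaluation map $H^0(S_C^\vee)\otimes\mathcal{O}_C\to S_C^\vee$ splits accordingly, and as this kernel is realized as $E_C^\vee$ by the dualized tautological sequence (using $H^0(E_C^\vee)=0$), the Mukai bundle $E_C$ would decompose — impossible, since $E_C$ is stable.

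I expect the genuinely delicate point to be exactly this rank-$3$ analysis. The numerics $\mu(S_C^\vee)=7/2$ leave precisely enough room for the a priori destabilizing quotient $Q_M$, and the hypothetical splitting $S_C^\vee\cong\zeta\oplus Q_\eta$ is perfectly consistent on the level of ranks, degrees and cohomology; ruling it out genuinely requires the full Mukai package — stability of $E_C$, stability of $Q_\eta$, the uniqueness in Lemma~\ref{lem:morph-to-g15}, and the Brill--Noether vanishing forced by $W^2_7(C)=\emptyset$.
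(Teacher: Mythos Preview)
Your proof is correct and follows essentially the same approach as the paper: case analysis on the rank of a quotient, using global generation together with Lemma~\ref{lem:determinant-sections} for ranks $1$ and $2$, and in rank $3$ reducing to a forced splitting $S_C^\vee \cong \zeta \oplus Q_\eta$ that contradicts the indecomposability of the Mukai bundle $E_C$. The only tactical difference is in the rank-$3$ subcase $\deg M = 10$: you rule it out by showing $M = \eta(p)$ would have $p$ as a base point (since $h^0(\eta(p)) = h^0(\eta)$), whereas the paper excludes it earlier via the section count $h^0(F) \geq h^0(S_C^\vee) - h^0(Q_L) = 2$, which forces $\deg F \geq 5$ by gonality and hence $\deg M \leq 9$ directly.
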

\begin{proof}
  Recall that $S_C^\vee$ is globally generated of slope $\mu(S_C^\vee) = 7/2$.
  Let
  \begin{equation}
    \label{eq:destabilizing-sequence}
    0 \rightarrow F \rightarrow S_C^\vee \rightarrow M \rightarrow 0
  \end{equation}
  be an exact sequence of vector bundles.
  We distinguish several possibilities, depending on the rank of $M$.
  In each case we have to prove that $\mu(M) > \mu(S_C^\vee) = \frac{7}{2}$.
  \begin{itemize}
  \item $\rk(M) = 1$:
    Since $S_C^\vee$ is globally generated, so is $M$.  Hence $h^0(C, M) \geq 2$
    which implies $\mu(M) = \deg(M) \geq 5$, for $W^1_4(C) = \emptyset$.
  \item $\rk(M) = 2$:  Again $M$ is generated by global sections
    and we have $H^0(C, M^\vee) = 0$ by dualizing the exact sequence \eqref{eq:destabilizing-sequence}.
    Using Lemma \ref{lem:determinant-sections} we have
    $h^0(C, \det M) \geq 3$ which implies $\deg(M) \geq 8$ (there is no $\mathfrak{g}^2_7$ on $C$).
    Hence $\mu(M) \geq 4$.
  \item $\rk(M) = 3$:  Now $F$ is a line bundle, more concretely $F = \omega_C \otimes L^{-1}$
    where $L = \det(M)$.  Again by Lemma \ref{lem:determinant-sections}
    we have $h^0(C, L) \geq 4$.  If the inequality is strict, then $\deg(M) \geq 11$
    since $C$ has no $\mathfrak{g}^4_{10}$.  Hence $\mu(M) \geq \frac{11}{3} > \frac{7}{2}$
    and we are done.

    So assume $h^0(C, L) = 4$.  Then $M$ is actually the dual $Q_L$ of the kernel bundle of $L$.
    Since $h^0(C, F) + h^0(C, M) \geq h^0(C, S_C^\vee) = 6$ we have $h^0(C, F) \geq 2$
    and therefore $\deg(F) \geq 5$, i.e., $\deg(M) \leq 9$.
    Since $C$ has no $\mathfrak{g}^3_8$ we must have $\deg(L) \geq 9$.
    Together this implies that $\deg(M) = 9$, i.e., $L \in W^3_9(C)$ and $F \in W^1_5(C)$.
    This means $S_C^\vee$ sits in the exact sequence
    \begin{equation*}
      0 \rightarrow F \rightarrow S_C^\vee \rightarrow Q_L \rightarrow 0.
    \end{equation*}
    But from Lemma \ref{lem:morph-to-g15} we also have
    \begin{equation*}
      0 \rightarrow Q_L \rightarrow S_C^\vee \rightarrow F \rightarrow 0.
    \end{equation*}
    So we have the composition
    $\varphi\colon Q_L \xrightarrow{\beta} S_C^\vee \xrightarrow{\alpha} Q_L$
    which is nonzero since $\ker(\alpha) = F$.  Because $Q_L$
    is stable, $\varphi$ is a homothety and in particular invertible.
    Then $\beta\circ\varphi^{-1}$ induces a splitting
    $S_C^\vee = F \oplus Q_L$.
    But since we have
    \begin{equation*}
      0 \rightarrow S_C \rightarrow H^0(C, E_C) \otimes \mathcal{O}_C \rightarrow E_C \rightarrow 0
    \end{equation*}
    and $E_C$ is nonsplit, this is a contradiction.\qedhere
  \end{itemize}
\end{proof}

\subsection{Stability of the normal bundle}
\label{subsec:stability-normal-bundle-g8}

We have established that $S_C^\vee$ is stable.
In order to prove the same for $\ExplicitNormalBundle$,
we need some heavy machinery.
\begin{theorem}[\autocite{Kobayashi1982,Luebke1983,Donaldson1985,UhlenbeckYau1986}]
  \label{thm:kluduy}
  Let $E$ be a vector bundle on a complex smooth projective variety.
  If $E$ is Hermite--Einstein then it is polystable.
  If $E$ is stable then it is an Hermite--Einstein bundle.
\end{theorem}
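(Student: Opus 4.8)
The plan is to establish the two implications of this Kobayashi--Hitchin correspondence separately, as they are of entirely different natures. For the first --- Hermite--Einstein implies polystable, which goes back to Kobayashi and L\"ubke --- fix a Hermite--Einstein metric $h$ on $E$ relative to the polarizing K\"ahler form $\omega$, so that $\sqrt{-1}\,\Lambda_\omega F_h = \lambda\,\id_E$ with $\lambda$ proportional to $\mu(E)$. Given a coherent subsheaf $F \subseteq E$, I would restrict to the dense open locus where $F$ is a holomorphic subbundle, equip it with the induced metric, and invoke the Gauss--Codazzi formula: the curvature of $F$ equals $F_h|_F - \beta^{*}\wedge\beta$, with $\beta$ the second fundamental form. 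Tracing, wedging with $\omega^{n-1}$ and integrating then gives $\mu(F) \le \mu(E)$, since $\tr(\beta^{*}\wedge\beta)$ enters with a fixed sign; one has to check that this integral converges across the singular locus of $F$, which holds because $F$ is locally free outside a set of codimension $\ge 2$. This yields semistability, and if $\mu(F) = \mu(E)$ for a saturated $F$ then $\beta \equiv 0$, so $F$ is a genuine holomorphic subbundle, its orthogonal complement is holomorphic as well, both are Hermite--Einstein of slope $\mu(E)$, and induction on the rank produces the required splitting into stable pieces of equal slope.

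The second implication --- stable implies Hermite--Einstein, i.e., the Donaldson--Uhlenbeck--Yau theorem --- is genuinely analytic. Since we are on a projective variety I would follow Donaldson's approach via the nonlinear heat equation $h^{-1}\partial_t h = -(\sqrt{-1}\,\Lambda_\omega F_h - \lambda\,\id)$ starting from an arbitrary metric $h_0$: first short-time existence (the flow is parabolic), then long-time existence via maximum-principle estimates on $\sup_X|\sqrt{-1}\,\Lambda_\omega F_h - \lambda\,\id|$ that preclude finite-time blow-up, and finally convergence as $t \to \infty$. Stability enters in the last step: were $\sup_X|\sqrt{-1}\,\Lambda_\omega F_{h(t)} - \lambda\,\id|$ not to tend to $0$, then after rescaling and passing to a weak limit one would extract a nonzero weakly holomorphic $L^2_1$ subbundle of $E$, hence --- by the Uhlenbeck--Yau regularity theorem --- the saturation of a genuine coherent subsheaf, which a Chern--Weil computation shows to destabilize $E$, a contradiction. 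So the flow converges to a Hermite--Einstein metric. The alternative is the Uhlenbeck--Yau continuity method: solve $\sqrt{-1}\,\Lambda_\omega F_{h_\varepsilon} - \lambda\,\id = \varepsilon\log(h_\varepsilon h_0^{-1})$, solvable for every $\varepsilon > 0$, and let $\varepsilon \to 0$, with the same dichotomy between uniform $C^0$-control (producing the metric) and the emergence of a destabilizing subsheaf.

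The main obstacle is precisely the closing move of the hard direction: converting the analytic breakdown of the a priori estimates into an honestly \emph{algebraic} destabilizing subsheaf. This leans on two substantial external inputs --- Uhlenbeck's weak-compactness and removable-singularity theorems for connections with $L^2$-bounded curvature, and the Uhlenbeck--Yau result that an $L^2_1$ orthogonal projection $\pi$ satisfying $\pi = \pi^{*} = \pi^{2}$ and $(\id - \pi)\bar\partial\pi = 0$ is automatically the projection onto a coherent subsheaf, smooth off an analytic set of codimension $\ge 2$ --- together with the identification of the analytically defined degree of that subsheaf (computed from the singular induced metric) with its algebraic degree. Granting these, the argument closes; the parabolic theory of the flow, the maximum-principle estimates, and the Gauss--Codazzi computation of the easy direction are by comparison routine. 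In what follows we shall apply the statement only to the already-established stable bundle $S_C^\vee$ and, in the other direction, to $\ExplicitNormalBundle$ once its Hermite--Einstein property is in hand, so we will invoke Theorem~\ref{thm:kluduy} as a black box.
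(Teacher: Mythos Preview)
The paper does not prove this theorem at all: it is stated with a citation to \autocite{Kobayashi1982,Luebke1983,Donaldson1985,UhlenbeckYau1986} and then immediately applied. So there is no ``paper's own proof'' to compare your sketch against; the paper treats the Kobayashi--Hitchin correspondence purely as an external input, exactly as you yourself conclude in your final sentence.

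Your outline of the two directions is the standard one and is correct in its broad strokes: the Kobayashi--L\"ubke argument via the second fundamental form for the easy direction, and either Donaldson's heat flow or the Uhlenbeck--Yau continuity method for the hard direction, with the crucial step being the extraction of a destabilizing coherent subsheaf from a weakly holomorphic $L^2_1$ projection. You correctly flag the genuine analytic inputs (Uhlenbeck compactness, the regularity theorem for weakly holomorphic subbundles, and the matching of analytic and algebraic degree). Since the paper only ever uses the theorem on curves, it even remarks that the earlier and lighter Narasimhan--Seshadri theorem would suffice; your sketch is therefore both accurate and more than what the paper actually needs.
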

\begin{corollary}
  If $E$ is a stable vector bundle on a complex smooth projective variety,
  then $\wedge^q E$ is polystable.  Analogous statements hold
  for $\Sym^q E$ and tensor products of stable vector bundles.
\end{corollary}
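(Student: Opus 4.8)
The plan is to invoke the Kobayashi--Hitchin correspondence, Theorem \ref{thm:kluduy}, in both directions. Since $E$ is stable, it carries a Hermite--Einstein metric $h$ with respect to a fixed Kähler form $\omega$ on the variety $X$; that is, the curvature $F_h$ of the Chern connection satisfies $\sqrt{-1}\,\Lambda_\omega F_h = \lambda\cdot\id_E$ for the constant $\lambda$ determined by $\mu(E)$ and $\omega$. The idea is then to equip $\wedge^q E$ (respectively $\Sym^q E$, or a tensor product of stable bundles) with the naturally induced Hermitian metric, to check that it is again Hermite--Einstein, and to conclude polystability from the other implication in Theorem \ref{thm:kluduy}.

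The key step is the curvature computation for induced metrics. If $E_1,\dots,E_k$ carry Hermite--Einstein metrics with Einstein constants $\lambda_1,\dots,\lambda_k$, all with respect to the same $\omega$, then the curvature of the induced Chern connection on $E_1\otimes\cdots\otimes E_k$ is the sum $\sum_i \pi_i^* F_{h_i}$ in the evident notation, and applying the trace operator $\Lambda_\omega$ yields $\sqrt{-1}\,\Lambda_\omega F = \bigl(\textstyle\sum_i \lambda_i\bigr)\cdot\id$; hence the tensor product metric is Hermite--Einstein. Since we work over $\CC$, the bundles $\wedge^q E$ and $\Sym^q E$ are direct summands of $E^{\otimes q}$ preserved by the induced connection, so they inherit Hermite--Einstein metrics, with Einstein constant $q\lambda$. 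Applying Theorem \ref{thm:kluduy} once more, each of these bundles is polystable, and for tensor products one applies the statement directly to $E_1\otimes\cdots\otimes E_k$.

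The only mild subtlety, and the point most worth making explicit, is that stability is a priori polarization-dependent, so one must fix a single Kähler form $\omega$ throughout and note that every stable bundle on $(X,\omega)$ is Hermite--Einstein with respect to that same $\omega$; the Einstein constants of the various factors may differ, but they simply add under tensor product, which is all that the argument requires. Everything else is the standard fact that the Hermite--Einstein condition is preserved under the usual multilinear-algebra operations on vector bundles, together with the observation that a Hermite--Einstein metric restricts to one on any connection-invariant subbundle.
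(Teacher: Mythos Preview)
Your argument is correct and follows exactly the same route as the paper: apply Theorem~\ref{thm:kluduy} to pass from stable to Hermite--Einstein, use that the Hermite--Einstein condition is preserved under the standard multilinear-algebra constructions, and then apply Theorem~\ref{thm:kluduy} again to conclude polystability. The only difference is that the paper cites \autocite{Luebke1983} for the preservation step, whereas you sketch the curvature computation directly.
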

\begin{proof}
  $E$ is Hermite--Einstein by Theorem \ref{thm:kluduy},
  hence $\wedge^q E$ is as well (see e.g.\@ \autocite{Luebke1983}).
  The result then follows by using Theorem \ref{thm:kluduy} again.
\end{proof}
Note that since we are on a curve, we could just as well use
the Narasimhan--Seshadri theorem \autocite{NS1965}, which is twenty years earlier
and less involved.  With the eventual study of normal bundles
of polarized K3 surfaces in mind,
however, we have cited here the more general result.

Hence we know $\MyNormalBundle(-1) = \wedgesc$ is polystable
and we are left with proving that it is indecomposable.
We do this by excluding all possible ranks of bundles
that could appear in a splitting.

Observe first that the sequence
\begin{equation*}
  0 \rightarrow Q_\eta \rightarrow S_C^\vee \rightarrow \zeta \rightarrow 0
\end{equation*}
from Lemma \ref{lem:morph-to-g15}
for $\zeta \in W^1_5(C)$ and $\eta$ the Serre dual, leads to
\begin{equation}
  \label{eq:fundamental-exact-sequence}
  0 \rightarrow Q_\eta^\vee \otimes \eta \rightarrow \wedgesc
  \rightarrow Q_\eta \otimes \zeta \rightarrow 0.
\end{equation}
This exact sequence will be fundamental in the proof.
We will also need the following two facts:
\begin{lemma}
  \label{lem:normalbundle-no-g15}
  Let $\xi \in W^1_5(C)$.  Then $H^0(C, \MyNormalBundle(-1) \otimes \xi^{-1}) = 0$.
\end{lemma}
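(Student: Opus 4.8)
The plan is to feed the fundamental exact sequence \eqref{eq:fundamental-exact-sequence} with the $\mathfrak{g}^1_5$ at hand, i.e.\ to take $\zeta=\xi$ there, so that $\eta=\omega_C\otimes\xi^{-1}\in W^3_9(C)$, and to tensor the resulting sequence by $\xi^{-1}$.  Since $Q_\eta$ has rank $3$ and $\det Q_\eta=\eta$, one has $Q_\eta^\vee\otimes\eta=\bigwedge\nolimits^2 Q_\eta=M_\eta\otimes\eta$, so the twisted sequence reads
\[
  0\longrightarrow M_\eta\otimes\omega_C\otimes\xi^{-2}\longrightarrow \MyNormalBundle(-1)\otimes\xi^{-1}\longrightarrow Q_\eta\longrightarrow 0 ,
\]
recalling that $\MyNormalBundle(-1)=\wedgesc$.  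The long exact cohomology sequence then reduces the lemma to two statements: first, that $H^0(C,M_\eta\otimes\omega_C\otimes\xi^{-2})=0$, and second, that the connecting homomorphism $\delta\colon H^0(C,Q_\eta)\to H^1(C,M_\eta\otimes\omega_C\otimes\xi^{-2})$ is injective.

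The first statement is the routine half.  Tensoring the defining sequence $0\to M_\eta\to H^0(C,\eta)\otimes\mathcal{O}_C\to\eta\to 0$ of the kernel bundle by $\omega_C\otimes\xi^{-2}$ and taking global sections exhibits $H^0(C,M_\eta\otimes\omega_C\otimes\xi^{-2})$ as the kernel of the multiplication map $H^0(C,\eta)\otimes H^0(C,\omega_C\otimes\xi^{-2})\to H^0(C,\eta\otimes\omega_C\otimes\xi^{-2})$.  As $C$ carries no $\mathfrak{g}^4_{10}$, we have $h^0(C,\xi^{\otimes 2})\le 4$, hence $h^0(C,\omega_C\otimes\xi^{-2})=h^1(C,\xi^{\otimes 2})\le 1$; if this space is one-dimensional, multiplication by a generator is injective because $C$ is integral, so the kernel vanishes in every case.

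The injectivity of $\delta$ is the heart of the matter, and here I would pass to the dual, geometric picture: a global section of $\MyNormalBundle(-1)\otimes\xi^{-1}=\wedgesc\otimes\xi^{-1}$ is the same as an alternating form $b\colon S_C\otimes S_C\to\xi^{-1}$, equivalently a skew-symmetric homomorphism $b\colon S_C\to S_C^\vee\otimes\xi^{-1}$, and I want to conclude $b=0$.  The generic rank of $b$ is even.  If it is $4$, the Pfaffian of $b$ is a nonzero section of $\omega_C\otimes\xi^{-2}$, which forces $h^0(C,\xi^{\otimes 2})=4$ and confines the degeneracy of $b$ to the unique divisor of $|\omega_C\otimes\xi^{-2}|$.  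If it is $2$, the saturation $K\subseteq S_C$ of $\ker b$ is a rank-$2$ subbundle and $b$ factors through a generically invertible skew map $S_C/K\to(S_C/K)^\vee\otimes\xi^{-1}$; hence $\deg(S_C/K)\le -5$, while stability of $S_C$, which holds since $S_C^\vee$ is stable (proved above), gives $\deg K\le -8$, so $\deg K\in\{-8,-9\}$.  Because $K\hookrightarrow S_C\hookrightarrow\mathcal{O}_C^{\oplus 6}$ as a subbundle and $H^0(C,S_C)=0$, the bundle $K^\vee$ is globally generated with $H^0(C,K)=0$, so Lemma \ref{lem:determinant-sections} makes $\det K^\vee$ a linear series of degree $8$ or $9$ with at least three sections.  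These surviving configurations I would eliminate exactly as in the final case of the proof that $S_C^\vee$ is stable: using the vanishing from the second paragraph together with the stability of $Q_\eta$ (Lemma \ref{lem:stability-kernel-g39}) one extracts from $b$ a splitting of $S_C$ (equivalently of $S_C^\vee$) compatible with the evaluation sequence $0\to S_C\to H^0(C,E_C)\otimes\mathcal{O}_C\to E_C\to 0$, contradicting the indecomposability of the Mukai bundle $E_C$.

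The main obstacle is precisely finishing this generic-rank analysis.  Ranks $0$ and $4$ are comparatively harmless, but in the rank-$2$ case $\det K^\vee$ lands in $W^2_8(C)$ or $W^2_9(C)$, both non-empty on a Brill–Noether general curve, so no bare count of special linear series will do and one genuinely has to invoke the splitting obstruction carried by the Mukai exact sequence.  An alternative that bypasses the connecting map altogether is to run \eqref{eq:fundamental-exact-sequence} with a $\mathfrak{g}^1_5$ $\zeta\ne\xi$ (whenever $\#W^1_5(C)\ge 2$): the sub-bundle term is then sectionless by the argument of the second paragraph, and one is left to show $H^0(C,Q_\eta\otimes\zeta\otimes\xi^{-1})=0$, which again reduces — via Lemmas \ref{lem:stability-kernel-g39} and \ref{lem:determinant-sections} and the hypothesis $W^2_7(C)=\emptyset$ — to bounding the sub-line-bundles of $Q_\eta$.
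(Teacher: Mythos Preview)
Your approach via the fundamental sequence \eqref{eq:fundamental-exact-sequence} is not wrong in spirit, but as you yourself acknowledge, it is incomplete. The injectivity of the connecting map $\delta$ --- equivalently, the rank analysis of the skew form $b$ --- is never actually carried out. In the generic-rank-$4$ case you only observe that the Pfaffian lies in $H^0(C,\omega_C\otimes\xi^{-2})$ and that this space is at most one-dimensional; this does not rule the case out. In the rank-$2$ case you land with $\det K^\vee$ in $W^2_8(C)$ or $W^2_9(C)$, both nonempty, and then invoke ``extracting from $b$ a splitting of $S_C$'' without any mechanism for doing so; the analogy with the last case of the stability proof of $S_C^\vee$ is superficial, since there the splitting came from having \emph{two} exact sequences in opposite directions, which you do not have here. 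Your alternative via a second pencil $\zeta\ne\xi$ also stops short: the required vanishing $H^0(C,Q_\eta\otimes\zeta\otimes\xi^{-1})=0$ asks that $Q_\eta$ admit no sub-line bundle isomorphic to $\xi\otimes\zeta^{-1}$, a degree-$0$ bundle, and the bound $\deg\le 1$ from Lemma \ref{lem:sublinebundle-deg1} does not exclude this.

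The paper's proof takes a completely different and much shorter route, avoiding the sequence \eqref{eq:fundamental-exact-sequence} entirely. It uses the self-duality $\MyNormalBundle(-1)\cong\MyNormalBundle^\vee(2)$ established in Section~\ref{subsec:normalbundle-g8-description}, together with the standard identification $H^0(C,\MyNormalBundle^\vee(2))\cong I_2(K_C)$. Setting $\mu=\omega_C\otimes\xi^{-1}\in W^3_9(C)$, a nonzero element of $H^0(C,\MyNormalBundle^\vee(2)\otimes\xi^{-1})$ corresponds to a quadric containing the canonical curve whose equation lies in $\Sym^2 H^0(C,\mu)$, i.e.\ to a failure of injectivity of $\Sym^2 H^0(C,\mu)\to H^0(C,\mu^{\otimes 2})$. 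But that would mean the image of $C$ under $|\mu|$ in $\PP^3$ lies on a quadric surface, and a ruling of that quadric would cut out a $\mathfrak{g}^1_4$ on $C$, contradicting $W^1_4(C)=\emptyset$.
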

\begin{proof}
  Recall that $\MyNormalBundle(-1) = \MyNormalBundle^\vee(2)$.  We have the standard
  identification
  \[H^0(C, \MyNormalBundle^\vee(2)) = I_2(K_C)\]
  i.e., the global
  sections of $\MyNormalBundle^\vee(2)$ correspond to the quadrics in $\PP^7$ containing the canonical curve.
  If $\mu = K_C \otimes \xi^{-1}$ then $H^0(C, \MyNormalBundle(-1) \otimes \xi^{-1}) \not= 0$
  implies that the multiplication map
  \begin{equation*}
    \Sym^2 H^0(C, \mu) \rightarrow H^0(C, \mu^{\otimes 2})
  \end{equation*}
  is not injective.  But $\mu \in W^3_9(C)$ and if the image under the induced embedding
  were contained in a quadric surface $X$, then a ruling of $X$ would induce a
  $\mathfrak{g}^1_4$ on $C$.  By assumption $W^1_4(C) = \emptyset$, so this is a contradiction.
\end{proof}
\begin{lemma}
  \label{lem:sublinebundle-deg1}
  Every sub-line bundle $L$ of $Q_\eta$ has degree at most $1$.
  Every subbundle of $Q_\eta$ of rank $2$ has degree at most $4$.
\end{lemma}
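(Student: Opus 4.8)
The strategy is to bound degrees of subsheaves of $Q_\eta$ using the stability of $Q_\eta$ established in Lemma \ref{lem:stability-kernel-g39}, combined with the Brill--Noether geometry of $C$ (no $\mathfrak{g}^1_4$, no $\mathfrak{g}^2_7$, no $\mathfrak{g}^3_8$). Recall $Q_\eta$ has rank $3$, degree $9$, and slope $3$. First I would treat the sub-line bundle case. Let $L \hookrightarrow Q_\eta$ be a sub-line bundle; stability of $Q_\eta$ forces $\deg L < \mu(Q_\eta) = 3$, so $\deg L \leq 2$. It remains to rule out $\deg L = 2$. Here I would use the description of $Q_\eta$ as the dual of the kernel bundle of $\eta$, i.e.\ the exact sequence
\begin{equation*}
  0 \rightarrow \eta^{-1} \rightarrow H^0(C,\eta)^\vee \otimes \mathcal{O}_C \rightarrow Q_\eta \rightarrow 0.
\end{equation*}
A sub-line bundle $L$ of degree $2$ with, say, $h^0(C,L) \geq 1$ would produce (via the composed map $\mathcal{O}_C \hookrightarrow L \hookrightarrow Q_\eta$ and the above sequence) a failure of a multiplication/injectivity statement that translates into the existence of a $\mathfrak{g}^1_4$ or a special secant condition on the embedding by $|\eta|$ in $\PP^3$. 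If instead $h^0(C,L) = 0$, I would saturate $L$ and pass to the quotient $Q_\eta / L$, a rank-$2$ bundle of degree $7$, and analyze its subsheaves; but the cleaner route is to argue directly that a degree-$2$ sub-line bundle would force the image curve in $\PP^3$ to lie on a quadric (giving a ruling hence a $\mathfrak{g}^1_4$), exactly as in the proof of Lemma \ref{lem:normalbundle-no-g15}.

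For the rank-$2$ case, let $N \hookrightarrow Q_\eta$ be a rank-$2$ subsheaf, which I may assume saturated, so that $M = Q_\eta/N$ is a line bundle. Stability of $Q_\eta$ gives $\deg N < 2\mu(Q_\eta) = 6$, hence $\deg N \leq 5$; equivalently $\deg M \geq 4$. I need to improve $\deg N \leq 5$ to $\deg N \leq 4$, i.e.\ exclude $\deg N = 5$ (so $\deg M = 4$). If $\deg M = 4$, then $M$ is a quotient line bundle of the globally generated bundle $Q_\eta$, hence globally generated of degree $4$; since $W^1_4(C) = \emptyset$, this forces $h^0(C,M) \leq 1$, contradicting global generation of a positive-degree line bundle (a globally generated line bundle of degree $\geq 1$ has $h^0 \geq 2$). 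That contradiction rules out $\deg M = 4$, so $\deg N \leq 4$ as claimed. I should double-check the saturation step: if $N$ is not saturated, replacing it by its saturation only increases its degree, so bounding the saturated subbundle suffices.

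The main obstacle is the $\deg L = 2$ subcase of the rank-$1$ statement: stability alone only gives $\deg L \leq 2$, and excluding equality genuinely requires invoking the geometry of the $|\eta|$-embedding (absence of a $\mathfrak{g}^1_4$, or equivalently that the image in $\PP^3$ lies on no quadric of rank $\leq 4$ inducing a pencil). I would mirror the argument already used in Lemma \ref{lem:normalbundle-no-g15}: a sub-line bundle of $Q_\eta$ of degree $2$ is, after dualizing, a quotient $M_\eta \twoheadrightarrow L^{-1}$ with $\deg L^{-1} = -2$, and $M_\eta \subset H^0(C,\eta)\otimes\mathcal{O}_C$; tracing through, this yields a two-dimensional subspace of $H^0(C,\eta)$ whose sections have a common divisor of degree $\geq 2$, i.e.\ a $\mathfrak{g}^1_{\leq 7}$ that refines to a $\mathfrak{g}^1_5$ or smaller, ultimately a $\mathfrak{g}^2_7$ or $\mathfrak{g}^1_4$, contradicting the standing Brill--Noether generality assumption. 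Once that case is closed, everything else is immediate from stability plus the elementary fact that globally generated line bundles of positive degree have at least two sections.
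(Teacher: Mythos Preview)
Your rank-$2$ argument is correct and is exactly the paper's: the quotient line bundle $M = Q_\eta/N$ is globally generated, hence of degree $\geq 5$ since $W^1_4(C) = \emptyset$, forcing $\deg N \leq 4$.

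For the rank-$1$ case, however, your plan has a genuine gap. Stability indeed only gives $\deg L \leq 2$, and you then propose to exclude $\deg L = 2$ by arguing that such a sub-line bundle would force the $|\eta|$-image of $C$ in $\PP^3$ to lie on a quadric, producing a $\mathfrak{g}^1_4$. But there is no evident link between a degree-$2$ sub-line bundle of $Q_\eta = T_{\PP^3}(-1)|_C$ and a quadric through $C$; the analogy with Lemma~\ref{lem:normalbundle-no-g15} is misleading, since that lemma concerns $\NormalBundle^\vee(2)$, whose sections \emph{are} quadrics by definition, whereas $Q_\eta$ has no such interpretation. Your alternative sketch, producing from a quotient $M_\eta \twoheadrightarrow L^{-1}$ a two-dimensional subspace of $H^0(C,\eta)$ with a common divisor, is also not justified: a rank-$1$ quotient of $M_\eta$ does not pick out a pencil inside $H^0(\eta)$ in any obvious way.

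The paper's argument is both simpler and parallel to the one you already found for rank $2$. Given a saturated sub-line bundle $L \hookrightarrow Q_\eta$, the quotient $G = Q_\eta/L$ is a rank-$2$ vector bundle which is globally generated and satisfies $H^0(C,G^\vee) = 0$ (since $G^\vee \hookrightarrow M_\eta$ and $H^0(C,M_\eta) = 0$). Lemma~\ref{lem:determinant-sections} then gives $h^0(C,\det G) \geq 3$. Since $W^2_7(C) = \emptyset$, this forces $\deg G \geq 8$, hence $\deg L \leq 1$. Note that it is the absence of a $\mathfrak{g}^2_7$, not of a $\mathfrak{g}^1_4$, that does the work here---which also explains why your attempt to reduce to quadrics and pencils of degree $4$ could not close the gap.
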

\begin{proof}
  Consider an exact sequence
  \begin{equation*}
    0 \rightarrow L \rightarrow Q_\eta \rightarrow G \rightarrow 0
  \end{equation*}
  with $L$ a line bundle.  By increasing the degree of $L$, if needed,
  we may assume that $G$ is a vector bundle as well.
  Then by Lemma \ref{lem:determinant-sections} we have $h^0(C, \det(G)) \geq 3$
  and since $W^2_7(C) = \emptyset$ we must have $\deg(G) \geq 8$.
  Hence $\deg(L) \leq 1$.

  If $0 \rightarrow G \rightarrow Q_\eta$ is a rank $2$ subbundle of
  maximal degree then the quotient is a globally generated line bundle $L$,
  hence $\deg(L) \geq 5$.  This implies $\deg(G) \leq 4$.
\end{proof}
\begin{theorem}
  The normal bundle of a genus $8$ curve with $W^2_7(C) = \emptyset$ is stable.
\end{theorem}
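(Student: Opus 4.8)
The strategy is to show that $\MyNormalBundle(-1)=\wedgesc$ is indecomposable; since it is polystable of slope $7$ (established above from the stability of $S_C^\vee$), indecomposability is equivalent to stability, and then $\MyNormalBundle=\MyNormalBundle(-1)\otimes\mathcal O_C(1)$ is stable as well. Fix a pencil $\zeta\in W^1_5(C)$, let $\eta=\omega_C\otimes\zeta^{-1}\in W^3_9(C)$, and keep the fundamental exact sequence \eqref{eq:fundamental-exact-sequence} together with its surjection $p\colon\MyNormalBundle(-1)\to Q_\eta\otimes\zeta$, whose kernel is $Q_\eta^\vee\otimes\eta$; here $\deg(Q_\eta\otimes\zeta)=24$ and $\deg(Q_\eta^\vee\otimes\eta)=18$.

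Before the case analysis I would assemble a small numerical toolkit. From Lemma~\ref{lem:sublinebundle-deg1}: a sub-line bundle of $Q_\eta\otimes\zeta$ has degree $\le 6$, and a rank-$2$ subbundle has degree $\le 14$. From the stability of $Q_\eta$, hence of $Q_\eta^\vee\otimes\eta$ (Lemma~\ref{lem:stability-kernel-g39}): a sub-line bundle of $Q_\eta^\vee\otimes\eta$ has degree $\le 5$, and a rank-$2$ subbundle has degree $\le 11$. Finally, $h^0(C,Q_\eta)=4$ (Lemma~\ref{lem:stability-kernel-g39}) and $h^0(C,\MyNormalBundle(-1)\otimes\zeta^{-1})=0$ (Lemma~\ref{lem:normalbundle-no-g15} with $\xi=\zeta$).

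Suppose now that $\MyNormalBundle(-1)$ decomposes. Being polystable of rank $6$ and slope $7$, it then carries a stable direct summand $A$ with $r:=\rk A\in\{1,2,3\}$ and $\deg A=7r$. The composite $A\hookrightarrow\MyNormalBundle(-1)\xrightarrow{p}Q_\eta\otimes\zeta$ is nonzero, since a rank-$r$ subsheaf of $Q_\eta^\vee\otimes\eta$ has degree at most $6r<7r$. Put $r_1=\rk p(A)$. If $r_1<r$, splitting $A$ by the kernel of $p|_A$ — a subsheaf of $Q_\eta^\vee\otimes\eta$ — and its image — a subsheaf of $Q_\eta\otimes\zeta$ — and adding the bounds above gives $\deg A\le 11,\,17,\,19$ in the cases $(r,r_1)=(2,1),(3,1),(3,2)$ respectively, each strictly below $7r$; and $(r,r_1)=(1,1)$ is impossible because $A$ would then be a degree-$7$ sub-line bundle of $Q_\eta\otimes\zeta$. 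There remain the two tight cases $(r,r_1)=(2,2)$ and $(3,3)$, in which $p|_A$ is injective and $A\otimes\zeta^{-1}$ is a subsheaf of $Q_\eta$ of degree $2r$. For $r=3$ this subsheaf has colength $3$, so $h^0(C,A\otimes\zeta^{-1})\ge h^0(C,Q_\eta)-3=1$. For $r=2$ it has degree $4$, hence is a maximal rank-$2$ subbundle of $Q_\eta$ with line bundle quotient of degree $5$; Clifford's theorem bounds $h^0$ of that quotient by $3$, so $h^0(C,A\otimes\zeta^{-1})\ge 4-3=1$. Either way $h^0(C,A\otimes\zeta^{-1})>0$, yet $A\otimes\zeta^{-1}$ is a subsheaf of $\MyNormalBundle(-1)\otimes\zeta^{-1}$, contradicting $h^0(C,\MyNormalBundle(-1)\otimes\zeta^{-1})=0$. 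Therefore $\MyNormalBundle(-1)$ is indecomposable and the theorem follows.

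I expect the crux to be precisely the two tight cases $(r,r_1)\in\{(2,2),(3,3)\}$. Slope and degree bookkeeping alone cannot exclude a stable summand mapping onto a top-degree subsheaf of $Q_\eta\otimes\zeta$; one must instead push the geometric content of Lemma~\ref{lem:normalbundle-no-g15} — the non-existence of a $\mathfrak{g}^2_7$, recast as $\Hom(\zeta,\MyNormalBundle(-1))=0$ — through the fundamental sequence. Getting the subbundle-degree estimates sharp enough that the remaining cases fall out immediately, and confirming that a maximal rank-$2$ subbundle of $Q_\eta$ is forced to have a degree-$5$ line bundle quotient (so that $A\otimes\zeta^{-1}$ inherits sections from $Q_\eta$), are the steps requiring real care; the rest is bookkeeping.
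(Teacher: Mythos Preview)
Your proof is correct and follows essentially the same route as the paper: both use the fundamental sequence \eqref{eq:fundamental-exact-sequence}, rule out small-rank images via the degree bounds of Lemma~\ref{lem:sublinebundle-deg1} and the stability of $Q_\eta$, and dispatch the tight cases $(r,r_1)=(2,2),(3,3)$ by producing a section of $A\otimes\zeta^{-1}$ that contradicts Lemma~\ref{lem:normalbundle-no-g15}. The only cosmetic differences are your more uniform $(r,r_1)$ bookkeeping and your use of Clifford's bound $h^0\le 3$ for the degree-$5$ quotient in the $(2,2)$ case, where the paper invokes $W^1_4(C)=\emptyset$ to get $h^0=2$; either bound suffices.
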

\begin{proof}
  We have to show $\wedgesc$ does not decompose into a direct sum of stable bundles
  of slope $\mu(\wedgesc) = 7$.
  Assume that we can write
  $\wedgesc = F\oplus G$ where $F$ is stable of slope $\mu(F) = 7$.  We fix some notation.
  Let $i_F\colon F \hookrightarrow F\oplus G$
  be the inclusion and denote by $q$ the surjective map $\wedgesc \rightarrow Q_\eta \otimes \zeta$
  from equation \eqref{eq:fundamental-exact-sequence}.  Let
  \begin{equation*}
    \varphi = q \circ i_F \colon F \rightarrow Q_\eta \otimes \zeta.
  \end{equation*}
  We claim that $\rk(\varphi)$ cannot be $1$.

  Assume the contrary.  Then $B = \image(\varphi)$ is a line bundle and
  we have exact sequences
  \begin{equation*}
    0 \rightarrow \ker(\varphi) \rightarrow F \rightarrow B \rightarrow 0
  \end{equation*}
  and
  \begin{equation*}
    0 \rightarrow B \rightarrow Q_\eta \otimes \zeta \rightarrow \coker(\varphi) \rightarrow 0.
  \end{equation*}
  Lemma \ref{lem:sublinebundle-deg1} implies that every sub-line bundle of $Q_\eta \otimes \zeta$
  has degree at most $6$, hence $\deg(B) \leq 6$ as well.
  But $F$ is stable and has slope $\mu(F) = 7$, hence $\deg(B) \geq 7$
  (we even have $\deg(B) \geq 8$ if $\rk(F) \geq 2$).  This is impossible.

  Now we deal with the case $\rk(\varphi) = 0$, i.e., $\varphi = 0$.
  This means that $F$ is contained in the kernel of $q$, which is $Q_\eta^\vee \otimes \eta$, that is
  we get $0 \rightarrow F \rightarrow Q_\eta^\vee \otimes \eta$.
  This is impossible since $\mu(F) = 7$ and $\mu(Q_\eta^\vee \otimes \eta) = 6$,
  but $Q_\eta^\vee \otimes \eta$ is stable.

  We now analyze all possible cases for the rank of $F$.
  By swapping $F$ and $G$, if necessary, we may assume $F$ is stable of rank $1$, $2$ or $3$.
  The case of $\rk(F) = 1$ can already be excluded, since then $\rk(\varphi) \leq 1$.

  If $\rk(F) = 2$ then we must have $\rk(\varphi) = 2$, i.e.,
  \begin{equation*}
    0 \rightarrow F \rightarrow Q_\eta \otimes \zeta \rightarrow B \rightarrow 0
  \end{equation*}
  where $B$ is a line bundle, since every rank $2$ subbundle of $Q_\eta \otimes \zeta$
  has slope at most $7$.  Tensoring by $\zeta^{-1}$ we obtain
  \begin{equation*}
    0 \rightarrow F\otimes \zeta^{-1} \rightarrow Q_\eta
    \rightarrow B \otimes \zeta^{-1} \rightarrow 0
  \end{equation*}
  with $\deg(B \otimes \zeta^{-1}) = 5$, hence $h^0(C, B \otimes \zeta^{-1}) = 2$.
  Comparing this to $h^0(C, Q_\eta)$ we find $h^0(C, F \otimes \zeta^{-1}) \geq 2$
  and hence $h^0(C, \MyNormalBundle(-1) \otimes \zeta^{-1}) \geq 2$ in contradiction of
  Lemma \ref{lem:normalbundle-no-g15}.

  Finally, consider the case $\rk(F) = 3$.
  If $\rk(\varphi) = 2$ then we have the sequences
  \begin{equation*}
    0 \rightarrow \ker(\varphi) \rightarrow F \rightarrow \image(\varphi) \rightarrow 0, \quad
    0 \rightarrow \image(\varphi) \rightarrow Q_\eta \otimes \zeta
    \rightarrow \coker(\varphi) \rightarrow 0
  \end{equation*}
  which imply $\mu(\image(\varphi)) > 7$ and $\mu(\image(\varphi)) \leq 7$, respectively
  ($Q_\eta$ does not contain a rank $2$ subbundle of degree at least $5$).
  The last remaining possibility is then $\rk(\varphi) = 3$, i.e.,
  $0 \rightarrow F \rightarrow Q_\eta \otimes \zeta$.  The quotient is a torsion sheaf of length $3$.
  Tensoring the sequence by $\zeta^{-1}$ and comparing global sections,
  we again find $h^0(C, F \otimes \zeta^{-1}) \geq 1$ in contradiction to
  $h^0(C, \MyNormalBundle(-1) \otimes \zeta^{-1}) = 0$.
\end{proof}

\section{Polystability of the normal bundle on curves with a $\mathfrak{g}^2_7$}
\label{sec:semistability-g27}

Ide and Mukai describe canonical models of genus $8$ curves having a $\mathfrak{g}^2_7$ in \autocite{MI2003}.
As long as $W^1_4(C) = \emptyset$, the scheme $W^2_7(C)$ has length exactly $2$.
In the general case, there are precisely two non-autoresidual $\mathfrak{g}^2_7$,
say $W^2_7(C) = \{ \alpha, \beta \}$ with $\alpha \not= \beta$.
We then get a map $\varphi\colon C \rightarrow \PP^2 \times \PP^2$ induced by the linear
system $|\alpha| \times |\beta|$.  It is shown in \autocite{MI2003} that $\varphi$ is an embedding and
its image a complete intersection of divisors of type $(1,2)$, $(2,1)$ and $(1,1)$.

Let $\mathcal{S}$ be the image of the Segre embedding
$\sigma\colon \PP^2 \times \PP^2 \rightarrow \PP^8$.
Then the canonical model of $C$ lies in $\sigma(\PP^2 \times \PP^2)$ intersected with
a hyperplane $H = \PP^7$.  We let $W = \mathcal{S} \cap H$.
So inside $W$, $C$ is the transversal intersection of a divisor $D_{1,2}$ of type $(1,2)$
and one divisor $D_{2,1}$ of type $(2,1)$.
We have the exact sequence
\begin{equation*}
  0 \rightarrow \NormalBundle_{C/W} \rightarrow \NormalBundle_{C/\PP^7}
  \rightarrow \NormalBundle_{W/\PP^7}|_C \rightarrow 0.
\end{equation*}
Since $C$ is a complete intersection in $W$, the normal bundle is just
\begin{equation*}
  \NormalBundle_{C/W} =
  \mathcal{O}_{\PP^2\times\PP^2}(1,2)|_C \oplus \mathcal{O}_{\PP^2\times\PP^2}(2,1)|_C
  = \omega_C \otimes (\alpha \oplus \beta)
\end{equation*}
or equivalently $\NormalBundle_{C/W}(-1) = \alpha \oplus \beta$ under the canonical embedding
with $\mathcal{O}_C(1) = \omega_C$.
We immediately get that $\NormalBundle_{C/\PP^7}$ is not stable,
since it contains $\alpha\oplus \beta$ as a subbundle.

The following fact seems to be well-known and follows from the more general
description of tangent bundles of flag varieties:
\begin{lemma}
  The normal bundle of $\mathcal{S}$ in $\PP^8$ is
  \begin{equation*}
    K_{(1,0)} \otimes K_{(0,1)} \otimes \mathcal{O}_{\mathcal{S}}(1,1)
  \end{equation*}
  where $K_{(i,j)}$ is the kernel of the evaluation map
  $H^0(\mathcal{S}, \mathcal{O}_{\mathcal{S}}(i,j)) \otimes \mathcal{O}_{\mathcal{S}}
  \rightarrow \mathcal{O}_{\mathcal{S}}(i,j)$.
\end{lemma}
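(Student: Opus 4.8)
The plan is to run the classical computation that yields the normal bundle of a Grassmannian in its Plücker embedding (see \autocite{Manivel1998}), exploiting the product structure. Write $\mathcal{S} = \PP(V)\times\PP(W)$ with $V,W$ three‑dimensional, Segre‑embedded in $\PP(V\otimes W)=\PP^8$, and set $\mathcal{O}_{\mathcal{S}}(i,j)=\pr_1^*\mathcal{O}_{\PP(V)}(i)\otimes\pr_2^*\mathcal{O}_{\PP(W)}(j)$, so that $\sigma^*\mathcal{O}_{\PP^8}(1)=\mathcal{O}_{\mathcal{S}}(1,1)$ and, by the Künneth formula, $H^0(\mathcal{S},\mathcal{O}_{\mathcal{S}}(1,1))=H^0(\mathcal{S},\mathcal{O}_{\mathcal{S}}(1,0))\otimes H^0(\mathcal{S},\mathcal{O}_{\mathcal{S}}(0,1))$. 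Since $\mathcal{O}_{\mathcal{S}}(1,0)=\pr_1^*\mathcal{O}_{\PP(V)}(1)$, the bundle $K_{(1,0)}$ is simply the pullback under $\pr_1$ of the corresponding universal rank‑$2$ bundle on $\PP(V)=\PP^2$ (dualizing the Euler sequence identifies it with $\pr_1^*\Omega_{\PP(V)}(1)$, equivalently with the pullback of the universal subbundle); likewise $K_{(0,1)}=\pr_2^*\Omega_{\PP(W)}(1)$.

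The computation itself I would organize around three exact sequences on $\mathcal{S}$: the normal bundle sequence $0\to T_{\mathcal{S}}\to T_{\PP^8}|_{\mathcal{S}}\to\NormalBundle_{\mathcal{S}/\PP^8}\to 0$, together with $T_{\mathcal{S}}=\pr_1^*T_{\PP(V)}\oplus\pr_2^*T_{\PP(W)}$; the restriction of the Euler sequence of $\PP^8$,
\[
  0\to\mathcal{O}_{\mathcal{S}}\to (V\otimes W)\otimes\mathcal{O}_{\mathcal{S}}(1,1)\to T_{\PP^8}|_{\mathcal{S}}\to 0 ,
\]
whose middle term factors as $\bigl(V\otimes\mathcal{O}_{\mathcal{S}}(1,0)\bigr)\otimes_{\mathcal{O}_{\mathcal{S}}}\bigl(W\otimes\mathcal{O}_{\mathcal{S}}(0,1)\bigr)$; and the pulled‑back Euler sequences of the two factors (twisted up so that $\pr_1^*T_{\PP(V)}$ is the quotient of $V\otimes\mathcal{O}_{\mathcal{S}}(1,0)$, and similarly for $W$). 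The key point — which I would verify either by computing the differential of $\sigma$ at a point $([v],[w])$, whose image is the rank‑$4$ space of maps $v\otimes w\mapsto v'\otimes w+v\otimes w'$ with cokernel canonically $\Hom(\langle v\otimes w\rangle,(V/\langle v\rangle)\otimes(W/\langle w\rangle))$, or by a snake‑lemma chase identifying the image of $T_{\mathcal{S}}$ in the middle term above with the ``unmixed'' subbundle $\bigl(S_V\otimes W+V\otimes S_W\bigr)\otimes\mathcal{O}_{\mathcal{S}}(1,1)$, where $S_V\subset V\otimes\mathcal{O}_{\mathcal{S}}$, $S_W\subset W\otimes\mathcal{O}_{\mathcal{S}}$ are the tautological sub‑line‑bundles — is that the quotient $\NormalBundle_{\mathcal{S}/\PP^8}$ is the tensor product of the two universal rank‑$2$ quotient bundles of the factors, twisted by $\mathcal{O}_{\mathcal{S}}(1,1)$. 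Feeding in the identifications from the first paragraph rewrites this tensor product in terms of $K_{(1,0)}$ and $K_{(0,1)}$ and gives the asserted formula.

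The main obstacle is nothing conceptual but the bookkeeping of the many $\mathcal{O}_{\mathcal{S}}(\pm1,0)$ and $\mathcal{O}_{\mathcal{S}}(0,\pm1)$ factors that appear when passing between $T_{\PP^2}$, $\Omega_{\PP^2}(1)$, the universal quotient bundle and their duals on each factor — it is here that one uses the rank‑$2$ isomorphism $\Omega_{\PP^2}(2)\cong T_{\PP^2}(-1)$ — and in checking that they assemble to exactly the twist $\mathcal{O}_{\mathcal{S}}(1,1)$; comparing determinants with $\det\NormalBundle_{\mathcal{S}/\PP^8}=\omega_{\mathcal{S}}\otimes\omega_{\PP^8}^\vee|_{\mathcal{S}}$ is a convenient final consistency check. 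As the statement already indicates, all of this is a special case of the description of tangent bundles of partial flag varieties (here $\PP^2\times\PP^2$ as a homogeneous space of $\SL_3\times\SL_3$, with $\mathcal{S}$ its minimal equivariant embedding), so one could alternatively invoke that description and simply match the homogeneous data.
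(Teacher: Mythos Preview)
The paper does not actually prove this lemma; it states the formula as well-known and points to the general description of tangent bundles of flag varieties. Your sketch is exactly the standard computation one carries out, and the identification of the normal space at $([v],[w])$ with $\Hom\bigl(\langle v\otimes w\rangle,(V/\langle v\rangle)\otimes(W/\langle w\rangle)\bigr)$ is the heart of the matter. One small slip: $K_{(1,0)}$ is not the pullback of the universal \emph{sub}bundle on $\PP(V)$ (that is the tautological line bundle $\mathcal{O}(-1)$), but rather $\pr_1^*\Omega_{\PP(V)}(1)\cong\pr_1^*Q_V^\vee$, the dual of the rank-$2$ universal quotient.

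Your own determinant check, however, reveals a typo in the stated formula. One has $\det\NormalBundle_{\mathcal{S}/\PP^8}=\omega_{\mathcal{S}}\otimes(\omega_{\PP^8}|_{\mathcal{S}})^{-1}=\mathcal{O}_{\mathcal{S}}(6,6)$, whereas $\det\bigl(K_{(1,0)}\otimes K_{(0,1)}\otimes\mathcal{O}_{\mathcal{S}}(1,1)\bigr)=\mathcal{O}_{\mathcal{S}}(2,2)$. Running your argument to the end gives $\NormalBundle_{\mathcal{S}/\PP^8}\cong\pr_1^*Q_V\otimes\pr_2^*Q_W\otimes\mathcal{O}_{\mathcal{S}}(1,1)$; then writing $\pr_1^*Q_V=K_{(1,0)}^\vee\cong K_{(1,0)}\otimes\mathcal{O}_{\mathcal{S}}(1,0)$ via the rank-$2$ duality you mention, one obtains
\[
  \NormalBundle_{\mathcal{S}/\PP^8}\;\cong\;K_{(1,0)}\otimes K_{(0,1)}\otimes\mathcal{O}_{\mathcal{S}}(2,2),
\]
with twist $(2,2)$ rather than $(1,1)$. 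This corrected version is what the paper implicitly uses immediately afterwards: restricting to $C$ and twisting by $\omega_C^{-1}$ gives $M_\alpha\otimes M_\beta\otimes\omega_C=Q_\alpha\otimes Q_\beta$ (using $\alpha\otimes\beta=\omega_C$ and rank-$2$ duality once more), which is exactly the formula for $\NormalBundle_{W/\PP^7}|_C(-1)$ claimed there. So your method is sound; it simply proves the correct statement rather than the one printed.
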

Using $\NormalBundle_{W/H} = \NormalBundle_{\mathcal{S}/\PP^8}|_W$ and
pulling this back to $C$ we obtain that
$\NormalBundle_{W/\P^7}|_C(-1)$ is equal to $Q_\alpha \otimes Q_\beta$.
Here $Q_\alpha$ and $Q_\beta$ are the duals of the kernel bundles associated
to $\alpha$ and $\beta$, respectively.
Hence $\MyNormalBundle(-1)$ sits in the exact sequence
\begin{equation*}
  0 \rightarrow \alpha \oplus \beta \rightarrow \MyNormalBundle(-1)
  \rightarrow Q_\alpha \otimes Q_\beta \rightarrow 0
\end{equation*}
and because $\MyNormalBundle$ is self-dual up to twist, this sequence splits.  Therefore
\begin{equation}
  \label{eq:normalbundle-g27-split}
  \MyNormalBundle(-1) = \alpha \oplus ( Q_\alpha \otimes Q_\beta ) \oplus \beta.
\end{equation}
To prove that this is in fact polystable, we will show the stability of $Q_\alpha \otimes Q_\beta$.
Some preliminary results will be needed.
\begin{lemma}
  $Q_\beta$ is stable and $h^0(C, Q_\beta) = 3$.  The maximal degree of a line subbundle
  of $Q_\beta$ is $2$.  Furthermore,
  \begin{equation*}
    H^0(C, Q_\beta \otimes \beta^{-1}) = H^0(C, Q_\beta \otimes \alpha^{-1}) = 0.
  \end{equation*}
  Analogous statements hold for $Q_\alpha$.
\end{lemma}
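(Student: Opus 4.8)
The plan is to establish each claimed property of $Q_\beta$ in turn, using that $\beta \in W^2_7(C)$ so that $Q_\beta$ has rank $2$, determinant $\beta$, hence slope $7/2$, and is globally generated with $h^0(C,Q_\beta) \geq 3$. First I would prove stability. Suppose $L \hookrightarrow Q_\beta$ is a destabilizing sub-line bundle, so $\deg L \geq 4$; after saturating we may take the quotient $G = Q_\beta/L$ to be a line bundle with $\deg G \leq 3$. Since $Q_\beta$ is globally generated, so is $G$, forcing $\deg G \geq 4$ unless $G = \mathcal{O}_C$ or $G$ contributes a $\mathfrak{g}^1_d$ with $d \leq 3$ — but $W^1_4(C) = \emptyset$ rules out everything with $\deg G \leq 4$ except trivial cases, and one checks directly that $\deg G \leq 3$ combined with global generation is impossible on a curve of genus $8$ with no $\mathfrak{g}^1_4$. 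This contradiction gives stability. (Alternatively, one can cite \autocite{EusenSchreyer2012} as was done for $Q_\eta$ in Lemma \ref{lem:stability-kernel-g39}, since $\beta$ is very ample — the canonical curve has no trisecant lines as $W^1_4(C) = \emptyset$.)

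Next, the global sections: from the defining sequence $0 \to \beta^{-1} \to H^0(C,\beta)^\vee \otimes \mathcal{O}_C \to Q_\beta \to 0$ and $h^1(C,\beta^{-1}) = h^0(C,\omega_C \otimes \beta) = h^0(C,\alpha) = 3$ (using that $\alpha$ and $\beta$ are the two non-autoresidual $\mathfrak{g}^2_7$'s, hence Serre-dual to each other up to the remaining structure — more precisely $\omega_C\otimes\beta^{-1}$ has the expected cohomology), one gets $h^0(C,Q_\beta) = 3$ exactly. Equivalently this follows from Lemma \ref{lem:determinant-sections}: $Q_\beta$ is globally generated with $H^0(C,Q_\beta^\vee) = 0$ and is the dual kernel bundle of $\det Q_\beta = \beta$, so $h^0(C,Q_\beta) = \rk Q_\beta + 1 = 3$.

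For the maximal degree of a line subbundle: stability already gives $\deg L \leq 3$ for any $L \hookrightarrow Q_\beta$; to improve this to $\deg L \leq 2$, suppose $\deg L = 3$, so the quotient line bundle $G$ has $\deg G = 4$, and $G$ is globally generated, hence (as $W^1_4(C) = \emptyset$) $h^0(C,G) = 1$ — but then from $h^0(C,L) + h^0(C,G) \geq h^0(C,Q_\beta) = 3$ we get $h^0(C,L) \geq 2$, i.e. $L$ is a $\mathfrak{g}^1_3$, again contradicting $W^1_4(C) = \emptyset$. So $\deg L \leq 2$. Finally, for the vanishing statements: $H^0(C, Q_\beta \otimes \beta^{-1}) \neq 0$ would produce a nonzero map $\beta \to Q_\beta$, whose image is a sub-line bundle of degree $\geq 7 - (\text{length of cokernel})$; since any sub-line bundle has degree $\leq 2$, the map would have to factor through a heavily singular quotient, which is impossible as $\beta$ is a line bundle mapping to a rank-$2$ bundle with torsion-free image of rank $1$ and degree $\leq 2 < 7$. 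Similarly $H^0(C, Q_\beta \otimes \alpha^{-1}) \neq 0$ gives a nonzero $\alpha \to Q_\beta$ with $\deg \alpha = 7$, contradicting the bound on line subbundles the same way.

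The main obstacle is pinning down the cohomology of $\omega_C \otimes \beta^{-1}$ and confirming that $Q_\beta^\vee$ has no sections, since this depends on the precise Brill–Noether setup of Ide–Mukai (in particular on $\alpha$ and $\beta$ being the only $\mathfrak{g}^2_7$'s and on $W^1_4(C) = \emptyset$); once the numerics $\rk Q_\beta = 2$, $\det Q_\beta = \beta$, $h^0 = 3$ are secured, everything else is a short argument with line subbundle bounds and the absence of low-degree pencils. The analogous statements for $Q_\alpha$ follow verbatim by exchanging the roles of $\alpha$ and $\beta$.
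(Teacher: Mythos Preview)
Your argument has a real gap at $h^0(C, Q_\beta) = 3$. In the first approach you write $h^1(\beta^{-1}) = h^0(\omega_C \otimes \beta) = h^0(\alpha) = 3$, but $\omega_C \otimes \beta$ has degree $21$ and $h^0 = 14$; it is $\omega_C \otimes \beta^{-1}$ that equals $\alpha$, and that computes $h^1(\beta) = 3$, which is not the term appearing in the long exact sequence for $Q_\beta$. Even with correct numbers, that sequence only yields $h^0(Q_\beta) \geq 3$; getting equality this way amounts to surjectivity of the multiplication map $H^0(\beta)\otimes H^0(\omega_C)\to H^0(\beta\otimes\omega_C)$, which you have not established. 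Your appeal to Lemma~\ref{lem:determinant-sections} is also misplaced: that lemma bounds $h^0(\det G)$, not $h^0(G)$, so it only reconfirms $h^0(\beta) = 3$.

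The paper closes this gap with a different device. Since the degree-$7$ plane model of $C$ under $|\beta|$ has geometric genus $8 < 15$, it is singular (so in particular $\beta$ is \emph{not} very ample, contrary to your parenthetical remark); choosing $a,b \in C$ lying over a node gives
\[
0 \longrightarrow \mathcal{O}_C(a+b) \longrightarrow Q_\beta \longrightarrow \beta(-a-b) \longrightarrow 0
\]
with $\beta(-a-b) \in W^1_5(C)$. This yields $h^0(Q_\beta) \leq 1 + 2 = 3$ at once, and tensoring by $\beta^{-1}$ or $\alpha^{-1}$ makes both outer terms have negative degree, giving the vanishing statements immediately.

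Your stability and line-subbundle arguments are essentially fine. For the vanishing, your idea can also be made to work, but it should be stated cleanly: any nonzero morphism from a line bundle into a vector bundle on a curve is injective, so a nonzero $\beta \to Q_\beta$ would exhibit $\beta$ as a subsheaf of degree $7$, whose saturation is then a line subbundle of degree $\geq 7 > 2$. The talk of ``heavily singular quotients'' and an image of degree $\leq 2$ obscures this simple point.
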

\begin{proof}
  Stability is shown as in Lemma \ref{lem:stability-kernel-g39}.
  Now $Q_\beta$ sits in the exact sequence
  \begin{equation*}
    0 \rightarrow \mathcal{O}_C(a+b) \rightarrow Q_\beta
    \rightarrow \beta \otimes \mathcal{O}_C(-a-b) \rightarrow 0
  \end{equation*}
  where $a,b \in C$ are map to a singular point of the degree $7$ plane model induced by $\beta$,
  hence $\beta\otimes \mathcal{O}_C(-a-b)$ is a $\mathfrak{g}^1_5$.
  This shows $h^0(C, Q_\beta) = 3$ and $\deg(L) \leq 2$ for every line subbundle
  $L$ of $Q_\beta$.  To see the last claim, tensor with $\beta^{-1}$ or $\alpha^{-1}$
  and take global sections.
\end{proof}
\begin{remark}
  Since $h^0(C, \MyNormalBundle) = 15$, equation \eqref{eq:normalbundle-g27-split}
  yields $h^0(C, Q_\alpha \otimes Q_\beta) = 9$ as a consequence.
  One can also proceed similarly to \autocite{Mukai2010}, section 5,
  to prove this result directly.
\end{remark}
\begin{lemma}
  $Q_\alpha \otimes Q_\beta$ is stable.
\end{lemma}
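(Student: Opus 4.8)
The plan is to prove stability of the rank $4$ bundle $Q_\alpha \otimes Q_\beta$ (which has slope $\mu = \mu(Q_\alpha) + \mu(Q_\beta) = \tfrac{5}{2} + \tfrac{5}{2} = 5$) by ruling out destabilizing subsheaves of each rank $1$, $2$, $3$. A subsheaf of slope $\geq 5$ may be taken to be a saturated subbundle, so its quotient is also a bundle and we may argue on both sides of the exact sequence. I would organize the argument around the exact sequences $0 \to \mathcal{O}_C(a+b) \to Q_\beta \to \zeta_\beta \to 0$ (and the analogous one for $Q_\alpha$) from the preceding lemma, where $\zeta_\beta = \beta(-a-b) \in W^1_5(C)$, together with the vanishings $H^0(C, Q_\beta \otimes \beta^{-1}) = H^0(C, Q_\beta \otimes \alpha^{-1}) = 0$ and their $Q_\alpha$-counterparts. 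Tensoring the $Q_\beta$-sequence by $Q_\alpha$ presents $Q_\alpha \otimes Q_\beta$ as an extension
\begin{equation*}
  0 \to Q_\alpha(a+b) \to Q_\alpha \otimes Q_\beta \to Q_\alpha \otimes \zeta_\beta \to 0,
\end{equation*}
and both $Q_\alpha(a+b)$ (slope $\tfrac{9}{2} < 5$) and $Q_\alpha \otimes \zeta_\beta$ (slope $\tfrac{15}{2} > 5$, but stable as the tensor of a stable bundle with a line bundle) are controlled.

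\textbf{Rank $1$ subbundles.} A line subbundle $L \hookrightarrow Q_\alpha \otimes Q_\beta$ with $\deg L \geq 5$ would, via the projection $Q_\alpha \otimes Q_\beta \to Q_\alpha \otimes \zeta_\beta$, either map to zero — forcing $L \hookrightarrow Q_\alpha(a+b)$, hence $\deg L \leq \deg(\text{max line subbundle of }Q_\alpha) + 2 \leq 4$, a contradiction — or inject into the stable bundle $Q_\alpha \otimes \zeta_\beta$; twisting down by $\zeta_\beta$ gives a line subbundle of $Q_\alpha$ of degree $\geq 5 - 5 = 0$, which is fine numerically, so here I instead use the vanishing: $L(-a-b)$ would be a line subbundle of $Q_\alpha$ of degree $\geq 3$, contradicting the bound $2$. (One should double‑check the exact bookkeeping, symmetrizing in $\alpha,\beta$ as needed.)

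\textbf{Rank $2$ and rank $3$ subbundles.} For a rank $2$ subbundle $F \hookrightarrow Q_\alpha \otimes Q_\beta$ with $\mu(F) \geq 5$, i.e.\ $\deg F \geq 10$, I would compose with the projection to $Q_\alpha \otimes \zeta_\beta$ and split into cases by $\rk(F \to Q_\alpha \otimes \zeta_\beta) \in \{0,1,2\}$; the rank $0$ and rank $1$ cases feed a sub(line)bundle of $Q_\alpha(a+b)$ or of $Q_\alpha$ of too-large degree, and the rank $2$ case exhibits $F$ as a rank $2$ subsheaf of $Q_\alpha \otimes \zeta_\beta$, which twisted down gives a rank $2$ subsheaf of $Q_\alpha$ of degree $\geq 10 - 10 = 0$ — not yet a contradiction, so one must instead use that $Q_\alpha \otimes \zeta_\beta$ is \emph{stable} of slope $\tfrac{15}{2}$, forcing $\mu(F) < \tfrac{15}{2}$ (no help) — the real contradiction should come from combining with global sections: compute $h^0$ of the putative quotient and of $Q_\alpha \otimes Q_\beta$ (which is $9$ by the remark) to force $h^0(C, F) $ large, then twist appropriately to contradict one of the vanishings $H^0(C, Q_\bullet \otimes \bullet^{-1}) = 0$, exactly as in the proof of the genus‑$8$ stability theorem above. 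The rank $3$ case is handled by passing to the rank $1$ quotient $Q_\alpha \otimes Q_\beta \to (Q_\alpha\otimes Q_\beta)/F$ of degree $\leq 5$ and running the rank $1$ argument on the dual side, using self-duality of $Q_\alpha \otimes Q_\beta$ up to twist (it is $\bigwedge$-type of a rank $2\times 2$ situation, or directly $\det = \alpha \otimes \beta$ so $(Q_\alpha \otimes Q_\beta)^\vee \otimes (\alpha\otimes\beta) \cong Q_\alpha \otimes Q_\beta$, turning rank $3$ subbundles into rank $1$ subbundles).

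\textbf{Main obstacle.} The routine part is the numerology; the delicate step is the middle case (rank $2$ subbundle mapping isomorphically onto a rank $2$ subsheaf of $Q_\alpha \otimes \zeta_\beta$), where naive slope bounds are not sharp enough. I expect to close it exactly as in Lemma~\ref{lem:normalbundle-no-g15} and the main theorem's proof: translate "$F$ destabilizes" into "a certain multiplication map $\Sym^2 H^0 \to H^0$ fails to be injective" or into a nonzero section of $Q_\alpha \otimes \alpha^{-1}$ or $Q_\beta \otimes \beta^{-1}$, both of which are forbidden. Getting that translation clean — keeping track of which of $\alpha,\beta$ plays which role after the twist by $\zeta_\beta$ — is where the care is needed. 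Once every rank is excluded, $Q_\alpha \otimes Q_\beta$ is stable, and then \eqref{eq:normalbundle-g27-split} exhibits $\MyNormalBundle(-1) = \alpha \oplus (Q_\alpha \otimes Q_\beta) \oplus \beta$ as a direct sum of stable bundles all of slope $5$, hence polystable.
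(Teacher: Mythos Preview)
There is a numerical slip that propagates through everything: $\alpha,\beta\in W^2_7(C)$ have degree $7$, so $\mu(Q_\alpha)=\mu(Q_\beta)=7/2$ and $\mu(Q_\alpha\otimes Q_\beta)=7$, not $5$ (and $\alpha,\beta$ themselves are line bundles of slope $7$, not $5$). Once you correct this, your rank-$1$ argument already fails to close: a line subbundle $L$ of degree $\geq 7$ injecting into $Q_\alpha\otimes\zeta_\beta$ gives $L\otimes\zeta_\beta^{-1}\hookrightarrow Q_\alpha$ of degree $\geq 2$, and degree-$2$ line subbundles of $Q_\alpha$ \emph{do} exist. (Your bookkeeping is also off here: the twist coming from $L\hookrightarrow Q_\alpha\otimes\zeta_\beta$ is $L\otimes\zeta_\beta^{-1}$, not $L(-a-b)$.)

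More importantly, you are missing the key structural shortcut that the paper uses. Since $Q_\alpha$ and $Q_\beta$ are each stable, Theorem~\ref{thm:kluduy} (or Narasimhan--Seshadri) gives that $Q_\alpha\otimes Q_\beta$ is automatically \emph{polystable}. Hence one only has to exclude \emph{direct summands} of slope $7$, not arbitrary destabilizing subsheaves; this dissolves your ``main obstacle'' entirely and makes the rank-$3$ case redundant (a rank-$3$ summand forces a rank-$1$ complement). For a rank-$1$ summand $L$ the paper uses that $L$ is globally generated and, via the section count in $Q_\alpha$, that $\xi\hookrightarrow L$; global generation then forces $h^0(L)\geq 3$, so $L\in\{\alpha,\beta\}$, whence $\Hom(L,Q_\alpha\otimes Q_\beta)\cong\Hom(Q_\alpha,Q_\beta)=0$ (or symmetrically) by stability and $Q_\alpha\not\cong Q_\beta$. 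For a rank-$2$ splitting $F\oplus G$, the paper projects $Q_\alpha(a+b)$ to each factor, shows both projections must be injective, deduces $h^0(F(-a)),\,h^0(G(-a))\geq 3$, and contradicts $h^0\bigl(Q_\alpha\otimes Q_\beta(-a)\bigr)=9-4=5$ from global generation of a rank-$4$ bundle. Without the polystability reduction, your direct attack on arbitrary subbundles of ranks $1$--$3$ does not go through with the tools at hand.
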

\begin{proof}
  As a tensor product of stable bundles we already know $Q_\alpha \otimes Q_\beta$
  is polystable (see Theorem \ref{thm:kluduy}) of slope $\mu(Q_\alpha \otimes Q_\beta) = 7$.

  Consider the exact sequence
  \begin{equation*}
    0 \rightarrow Q_\alpha(a+b) \xrightarrow{p} Q_\alpha \otimes Q_\beta
    \xrightarrow{q} Q_\alpha \otimes \xi \rightarrow 0
  \end{equation*}
  where $\beta(-a-b) = \xi \in W^1_5(C)$ and assume
  there exists a line bundle direct summand $L$ of $Q_\alpha \otimes Q_\beta$.
  The induced map $L \rightarrow Q_\alpha\otimes\xi$ cannot be zero,
  since otherwise we get $L \hookrightarrow \ker(q) = Q_\alpha (a+b)$,
  contradicting the stability of $Q_\alpha(a+b)$.
  So $L \rightarrow Q_\alpha \otimes \xi$ is injective, hence we obtain
  \begin{equation*}
    0 \rightarrow L \rightarrow Q_\alpha \otimes \xi \rightarrow B \rightarrow 0
  \end{equation*}
  where $B$ is a line bundle of degree $10$.  Looking at global sections,
  $h^0(C, L \otimes \xi^{-1}) = 1$ and hence $\xi$ is a subbundle of $L$.
  But $L$, as a direct summand of the globally generated bundle $Q_\alpha \otimes Q_\beta$,
  is itself globally generated, so $L \in W^2_7(C)$.
  If $L = \alpha$ then
  \begin{equation*}
    H^0(C, Q_\alpha \otimes Q_\beta \otimes L^{-1})
    = H^0(C, Q_\alpha^\vee \otimes Q_\beta) = \Hom(Q_\alpha, Q_\beta) = 0
  \end{equation*}
  by the stability of $Q_\alpha$ and $Q_\beta$ and the fact that $Q_\alpha$ and $Q_\beta$
  are not isomorphic.  The same statement holds if $L = \beta$.
  So $Q_\alpha \otimes Q_\beta$ does not contain a line bundle as direct summand.

  Assume now that $Q_\alpha \otimes Q_\beta = F \oplus G$ where $F$ and $G$
  are two rank $2$ bundles.
  Consider again the sequence
  \begin{equation*}
    0 \rightarrow Q_\alpha(a+b) \xrightarrow{p} F \oplus G
    \xrightarrow{q} Q_\alpha \otimes \xi \rightarrow 0
  \end{equation*}
  and the induced maps $\varphi\colon Q_\alpha(a+b) \rightarrow F$
  and $\psi\colon Q_\alpha(a+b) \rightarrow G$.  If $\varphi = 0$
  this would induce a surjection $Q_\alpha \otimes \xi \rightarrow F \rightarrow 0$,
  which is impossible.
  If the rank of $\varphi$ was $1$ then we would get
  \begin{equation*}
    Q_\alpha (a+b) \rightarrow \image(\varphi) \rightarrow 0, \quad
    0 \rightarrow \image(\varphi) \rightarrow F
  \end{equation*}
  and hence $\deg(\image(\varphi)) \geq 7$ and $\deg(\image(\varphi)) < 7$, respectively.
  So the only possibility is that $\varphi$ is injective.  The same reasoning
  applies to $\psi$, hence $Q_\alpha(a+b) \hookrightarrow F$
  and $Q_\alpha(a+b) \hookrightarrow G$.  This implies $h^0(C, F(-a)) \geq 3$, $h^0(C, G(-a)) \geq 3$,
  i.e., $h^0(C, Q_\alpha \otimes Q_\beta \otimes \mathcal{O}_C(-a)) \geq 6$.
  But $Q_\alpha \otimes Q_\beta$ is globally generated of rank $4$, so
  \begin{equation*}
    h^0(C, Q_\alpha \otimes Q_\beta \otimes \mathcal{O}_C(-a)) = h^0(C, Q_\alpha \otimes Q_\beta) - 4 = 5
  \end{equation*}
  which is again a contradiction.
\end{proof}
\begin{remark}
  Since we have $E_C = \alpha \oplus \beta$ for the Mukai bundle on curves with a
  $\mathfrak{g}^2_7$ but no $\mathfrak{g}^1_4$ we can still form the exact sequence
  \begin{equation*}
    0 \rightarrow S_C \rightarrow H^0(C, E_C) \otimes \mathcal{O}_C \rightarrow E_C \rightarrow 0,
  \end{equation*}
  even though it does not come from a tautological exact sequence over some Grassmannian,
  and then $S_C^\vee = Q_\alpha \oplus Q_\beta$.  We also get
  \begin{equation*}
    \wedgesc = \alpha \oplus (Q_\alpha \otimes Q_\beta) \oplus \beta
  \end{equation*}
  which is exactly the normal bundle $\MyNormalBundle(-1)$ of the canonical embedding of $C$.
  So the normal bundle has intrinsically the same description (in terms of the Mukai bundle)
  as on the general curve.
\end{remark}



\appendix
\section{The normal bundle in genus $6$}
\label{sec:app-genus6}

Every genus $6$ curve is tetragonal, hence by \autocite{AFO2016}, Proposition 3.2,
we know that the normal bundles of canonical genus $6$ curves are never semistable.
The proof in loc.\@ cit.\@ uses a rational scroll induced by the tetragonal pencil.
In the spirit of the previous sections, we are going to give an alternative proof
for the general curve using Mukai's description of genus $6$ curves as quadric
sections of Grassmannians.

By \autocite{Mukai1993}, section 5, a canonical genus $6$ curve $C$ is an
intersection of $G = \Grassmannian{2}{5} \subseteq \PP^9$ with a $4$-dimensional quadric
if and only if $W^1_4(C)$ is finite, i.e., if $C$ is not trigonal, bielliptic or a plane quintic.
The intersection of $G$ with a $\PP^5 = H \subseteq \PP^9$ is a del Pezzo surface $X$ and 
then $C$ is a quadric hypersurface section of $S$.

The inclusion $C \hookrightarrow G$ in the Grassmannian naturally
leads to a destabilizing sequence for the normal bundle $\mathcal{N}_{C/\PP^5}$.
Let $S$ and $Q$ be the tautological bundle and the universal quotient bundle of $G$, respectively.
We then have (as in section \ref{subsec:normalbundle-g8-description})
\begin{equation*}
  \mathcal{N}_{X/\PP^5} \cong \mathcal{N}_{\Grassmannian{2}{5}/\PP^9}|_X
  \cong (\wedge^2 S^\vee) \otimes \det Q.
\end{equation*}
Furthermore, the exact sequence of normal bundles
\begin{equation*}
  0 \rightarrow \mathcal{N}_{C/X} \rightarrow \mathcal{N}_{C/\PP^5}
  \rightarrow \mathcal{N}_{X/\PP^5}|_C \rightarrow 0
\end{equation*}
is split, i.e.,
\begin{equation*}
  \mathcal{N}_{C/\PP^5} \cong \mathcal{N}_{C/X} \oplus \mathcal{N}_{X/\PP^5}|_C
                        = \Big( \wedgesc \Big) \otimes \omega_C.
\end{equation*}
Here $S_C$ denotes the restriction of $S$ to $C$.  Since $S_C^\vee$ is of rank $3$ we have
\begin{equation*}
  \wedgesc \cong S_C \otimes \det( S_C^\vee ) \cong S_C \otimes \omega_C
\end{equation*}
implying $\mathcal{N}_{X/\PP^5}|_C = S_C \otimes \omega_C^{\otimes 2}$.
Now this is a vector bundle of slope
$(\deg \omega_C^{\otimes 5})/3 = 50/3$
and it is a direct summand of $\mathcal{N}_{C/\PP^5}$, which has slope
$(\deg \omega_C^{\otimes 7})/4 = 70/4$.  Since $50/3 < 70/4$, the normal bundle is unstable.

\section{The genus $9$ case}

The next case to consider in order to gain information
on Conjecture \ref{conj:afo} would be canonical curves of genus $9$.
For the general (i.e.\@ non-pentagonal) curve we again have a
Grassmannian embedding, this time
into the symplectic Grassmannian $\SpG(3,6)$ 
\begin{equation*}
  C \hookrightarrow \SpG(3,6) \subseteq \PP^{13}.
\end{equation*}
The embedding is again given by a uniquely defined Mukai bundle $E_C$,
here of rank $3$.  The details can be found in \autocite{Mukai2010}.
$C$ can be recovered as a transerval intersection of $\SpG(3,6)$
with an $8$-plane.

We may again use this description to obtain more information about
the normal bundle of the canonical embedding $C \hookrightarrow \PP^8$.
However, the situation seems to be considerably more complicated
and our methods run out of steam before we can fully prove stability.

Let us consider what information about $\NormalBundle_{C/\PP^8}$
we can get from the Grassmannian embedding.
$\SpG(3,6)$ is a subvariety of the
Grassmannian $G = \Grassmannian{3}{6}$ of $3$-planes in $H^0(C, E_C)$.
On $G$ we have the universal exact sequence
\begin{equation*}
  0 \rightarrow S_G \rightarrow V \otimes \mathcal{O}_G \rightarrow Q_G \rightarrow 0
\end{equation*}
where $Q_G$ is the universal quotient bundle and $S_G$ is the tautological bundle.
The sequence restricts to $X = \SpG(3,6)$ and on $X$ we can identify $S_X$ with $Q_X^\vee$:
\begin{equation*}
  0 \rightarrow Q_X^\vee \rightarrow V \otimes \mathcal{O}_X \rightarrow Q_X \rightarrow 0.
\end{equation*}
$X$ is embedded in $\PP^{13}$ by a restricted Plücker embedding.
\begin{lemma}[{\autocite[Section 2]{Mukai2010}}]
  The normal bundle of $\SpG(3,6) \subseteq \PP^{13}$ sits in the exact sequence
  \begin{equation*}
    0 \rightarrow \mathcal{O}_X \rightarrow \NormalBundle_{X/\PP^{13}}^\vee(2)
    \rightarrow \Sym^2 Q_X \rightarrow 0
  \end{equation*}
  where $\mathcal{O}_X(1) = \det Q_X$.
\end{lemma}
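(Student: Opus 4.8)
The plan is to use that $X = \SpG(3,6)$ is a rational homogeneous variety, so that every bundle in sight is equivariant and the asserted exact sequence is forced by the corresponding sequence of representations of a parabolic subgroup. Recall that $X$ is the Lagrangian Grassmannian of the $6$-dimensional symplectic vector space $V$ (so $V = H^0(C,E_C)$ with its Mukai symplectic form $\omega$); thus $X = \mathrm{Sp}(V)/P$, where $P$ stabilises a fixed Lagrangian $3$-plane $L_0 \subset V$, and $S_X$, $Q_X$, $\Tangent_X$, $\Tangent_{\PP^{13}}|_X$ and $\NormalBundle_{X/\PP^{13}}$ are all determined by their fibres at the base point $[P]$ as $P$-modules. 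I will use the two standard facts for Lagrangian Grassmannians: $\omega$ identifies $Q_X = V\otimes\mathcal{O}_X/S_X$ with $S_X^\vee$, and a tangent vector at a point $[L]$ is a symmetric homomorphism $L \to V/L \cong L^\vee$, so $\Tangent_X = \mathfrak{sp}(V)/\mathfrak{p} \cong \Sym^2 S_X^\vee = \Sym^2 Q_X$. The embedding $X \hookrightarrow \PP^{13}$ is the one attached to $\mathcal{O}_X(1) = \det Q_X = (\det S_X)^{-1}$, with $W := H^0(X,\mathcal{O}_X(1))^\vee$ the $14$-dimensional fundamental representation of $\mathrm{Sp}(V)$, realised as the space $\bigwedge\nolimits^{3}_{0} V$ of $3$-vectors annihilated by contraction with $\omega\colon \bigwedge\nolimits^{3} V \to V$; the point $[P]$ corresponds to the line $\bigwedge\nolimits^{3} L_0 \subseteq W$.

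The computation then proceeds as follows. First I would describe $W$ as a $P$-module: writing $V = L_0 \oplus L_0^\vee$ (choosing a Lagrangian complement, identified with $L_0^\vee$ via $\omega$) and grading $\bigwedge\nolimits^{3} V = \bigwedge\nolimits^{3}(L_0 \oplus L_0^\vee)$ by the number of factors taken from $L_0^\vee$, then intersecting each graded piece with $\ker$ of contraction by $\omega$, yields a $P$-stable filtration of $W$ with successive quotients, as $\mathrm{GL}(L_0)$-modules,
\begin{equation*}
  \det L_0, \qquad \Sym^2 L_0^\vee \otimes \det L_0, \qquad \Sym^2 L_0 \otimes (\det L_0)^{-1}, \qquad (\det L_0)^{-1},
\end{equation*}
the bottom piece $\det L_0 = \bigwedge\nolimits^{3} L_0$ being the line defining $[P]$. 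Since the Euler sequence at $[P]$ gives $\Tangent_{[P]}\PP^{13} \cong (W / \bigwedge\nolimits^{3}L_0)\otimes(\det L_0)^{-1}$, it follows that $\Tangent_{\PP^{13}}|_X$ carries a $P$-filtration with graded pieces $\Sym^2 S_X^\vee$, $\Sym^2 S_X \otimes \mathcal{O}_X(2)$ and $\mathcal{O}_X(2)$ (using $(\det S_X)^{-1} = \mathcal{O}_X(1)$). The differential of the embedding is an injective equivariant map $\Tangent_X \to \Tangent_{\PP^{13}}|_X$ with $\Tangent_X = \Sym^2 S_X^\vee$, and, by equivariance, it must identify $\Tangent_X$ with the first step of this filtration (the only $P$-submodule isomorphic to $\Sym^2 S_X^\vee$). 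Hence $\NormalBundle_{X/\PP^{13}} = (\Tangent_{\PP^{13}}|_X)/\Tangent_X$ sits in
\begin{equation*}
  0 \rightarrow \Sym^2 S_X \otimes \mathcal{O}_X(2) \rightarrow \NormalBundle_{X/\PP^{13}} \rightarrow \mathcal{O}_X(2) \rightarrow 0,
\end{equation*}
and dualising, twisting by $\mathcal{O}_X(2)$ and using $S_X^\vee \cong Q_X$ gives exactly the asserted sequence $0 \rightarrow \mathcal{O}_X \rightarrow \NormalBundle_{X/\PP^{13}}^\vee(2) \rightarrow \Sym^2 Q_X \rightarrow 0$.

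The step I expect to be most delicate is getting the $P$-module filtration of $W$ in the correct order and keeping track of the $\mathcal{O}_X(1) = \det Q_X$ twists and of the $\mathcal{O}(1)$ versus $\mathcal{O}(-1)$ convention on $\PP(W)$; once those are pinned down, the claim that the differential lands on the bottom graded piece is a pure weight count. An alternative, less representation-theoretic but more laborious, is to realise $X$ inside $G = \Grassmannian{3}{6}$ as the zero locus of the tautological section of $\bigwedge\nolimits^{2} S_G^\vee$ obtained by restricting $\omega$ to the tautological subbundle --- a regular section, since $\codim_G X = 3 = \rk \bigwedge\nolimits^{2} S_G^\vee$ --- whence $\NormalBundle_{X/G} = \bigwedge\nolimits^{2} S_X^\vee$, and then to combine this with the classical filtration of $\NormalBundle_{G/\PP^{19}}$ (cf.\ \autocite{Manivel1998}) and with the fact that $\PP^{13} = \PP(\bigwedge\nolimits^{3}_{0} V)$ meets $G \subset \PP^{19} = \PP(\bigwedge\nolimits^{3} V)$ set-theoretically exactly along $X$, forcing inclusions $\NormalBundle_{X/G} \hookrightarrow \NormalBundle_{\PP^{13}/\PP^{19}}|_X$ and $\NormalBundle_{X/\PP^{13}} \hookrightarrow \NormalBundle_{G/\PP^{19}}|_X$; the extra work there is in checking that the resulting extensions split as needed.
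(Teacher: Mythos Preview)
The paper does not actually prove this lemma: it is stated with a citation to \autocite[Section 2]{Mukai2010} and then used without further argument. So there is no ``paper's own proof'' to compare against; you are supplying what the paper omits.

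Your argument is correct. The decomposition of $W = \bigwedge\nolimits^{3}_{0} V$ under the Levi $\GL(L_0)$ into the four pieces $\det L_0$, $\Sym^2 L_0^\vee \otimes \det L_0$, $\Sym^2 L_0 \otimes (\det L_0)^{-1}$, $(\det L_0)^{-1}$ is exactly right (one checks that the contraction $\bigwedge^2 L_0 \otimes L_0^\vee \to L_0$ kills precisely the symmetric part $\Sym^2 L_0^\vee \otimes \det L_0$ under the identification $\bigwedge^2 L_0 \cong L_0^\vee \otimes \det L_0$, and dually for the other middle piece), and the nilradical of $P$ consists of symmetric maps $L_0^\vee \to L_0$, so the $P$-filtration is ordered as you wrote, with $\det L_0$ at the bottom. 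Passing to $\Tangent_{\PP^{13}}|_X = \SheafHom(\mathcal{O}_X(-1), W\otimes\mathcal{O}_X / \mathcal{O}_X(-1))$, identifying the image of $\Tangent_X = \Sym^2 S_X^\vee$ with the bottom graded piece, and then dualising and twisting gives the stated sequence. Your alternative route via $X \subset G = \Grassmannian{3}{6}$ as the zero locus of the tautological section of $\bigwedge^2 S_G^\vee$ is also sound and is closer in spirit to how such identities are often presented, but the direct $P$-module computation you gave first is cleaner and avoids the bookkeeping you anticipated.
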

Since $C$ is the intersection $C = \PP^8 \cap X$, we get that the
normal bundle $\NormalBundle_{C/\PP^8}$ of the canonical embedding
is equal to the normal bundle $\NormalBundle_{X/\PP^{13}}$ of $X$,
restricted to $C$.
Furthermore, the restriction of $Q_X$ to $C$ is the Mukai bundle $E_C$.
Thus we obtain the following exact sequence for the normal bundle of $C$.
\begin{lemma}
  The twisted conormal bundle of $C \subseteq \PP^8$ in its canonical embedding
  sits in the exact sequence
  \begin{equation}
    \label{eq:normal-bundle-g9-sequence}
    0 \rightarrow \mathcal{O}_C \rightarrow \NormalBundle_{C/\PP^8}^\vee(2)
    \rightarrow \Sym^2 E_C \rightarrow 0.
  \end{equation}
\end{lemma}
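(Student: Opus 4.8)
The plan is to obtain \eqref{eq:normal-bundle-g9-sequence} by restricting to $C$ the exact sequence for $X = \SpG(3,6) \subseteq \PP^{13}$ furnished by the preceding lemma, so the whole argument is formal once two preliminary identifications are in place. First, since a general (non-pentagonal) canonical genus $9$ curve is, by Mukai's description, a \emph{transversal} linear section $C = X \cap \PP^8$, the normal bundle of the canonical embedding is computed from that of $X$ exactly as in Section~\ref{subsec:normalbundle-g8-description}: the decomposition coming from the transversal intersection,
\[
  0 \to \NormalBundle_{C/X} \to \NormalBundle_{C/\PP^{13}} \to \NormalBundle_{C/\PP^8} \to 0,
\]
together with the normal bundle sequence of $C \subseteq X \subseteq \PP^{13}$ (which has the same first map), forces $\NormalBundle_{C/\PP^8} \cong \NormalBundle_{X/\PP^{13}}|_C$. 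Second, as recorded above, $Q_X|_C = E_C$; since $\mathcal{O}_X(1) = \det Q_X$, restriction gives $\mathcal{O}_X(1)|_C = \det E_C = \omega_C = \mathcal{O}_C(1)$, so the polarizations match and hence $\NormalBundle_{X/\PP^{13}}^\vee(2)|_C = \NormalBundle_{C/\PP^8}^\vee(2)$.

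With these in hand I would take the sequence of the preceding lemma,
\[
  0 \to \mathcal{O}_X \to \NormalBundle_{X/\PP^{13}}^\vee(2) \to \Sym^2 Q_X \to 0,
\]
and apply $-\otimes_{\mathcal{O}_X}\mathcal{O}_C$. All three terms are locally free, so the sequence is locally split and its pullback to $C$ remains exact (equivalently, $\Tor_1^{\mathcal{O}_X}(\mathcal{O}_C,\Sym^2 Q_X) = 0$). Since forming $\Sym^2$, duals and twists by line bundles all commute with restriction, and using $Q_X|_C = E_C$ together with the identifications of the first paragraph, the restricted sequence reads
\[
  0 \to \mathcal{O}_C \to \NormalBundle_{C/\PP^8}^\vee(2) \to \Sym^2 E_C \to 0,
\]
which is exactly \eqref{eq:normal-bundle-g9-sequence}.

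There is no deep obstacle here: the lemma is a formal consequence of the preceding one once one knows that $C$ is a transversal section of $X$ and that $Q_X|_C = E_C$. The only points demanding care are (i) the transversality of $C \subseteq X$, which is where one leans on Mukai's description in \autocite{Mukai2010}, and (ii) verifying that the twisting line bundle $\mathcal{O}_X(1) = \det Q_X$ restricts to $\omega_C$, so that the twist by $\mathcal{O}_X(2)$ appearing in the $X$-sequence is carried to the canonical twist $\omega_C^{\otimes 2}$ on $C$; both are routine.
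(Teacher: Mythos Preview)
Your argument is correct and is exactly the paper's approach: the paper derives the lemma in the two sentences immediately preceding it by noting that $C=\PP^8\cap X$ gives $\NormalBundle_{C/\PP^8}\cong\NormalBundle_{X/\PP^{13}}|_C$ and that $Q_X|_C=E_C$, then restricting the preceding lemma's sequence. You have simply spelled out the details (exactness of the restriction, matching of the twist) more carefully than the paper does.
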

Observe that the universal exact sequence of $X$ restricts to
\begin{equation*}
  0 \rightarrow E_C^\vee \rightarrow H^0(C, E_C) \rightarrow E_C \rightarrow 0
\end{equation*}
on $C$.

Already it is a non-trivial calculation to show that
\begin{equation*}
  \Sym^2 H^0(C, E_C) \rightarrow H^0(C, \Sym^2 E_C)
\end{equation*}
is an isomorphism and hence $h^0(C, \Sym^2 E_C) = 21$.
Having established this, we can make use of representation theory
(see \autocite[Exercise 15.36]{FH1991}) to identify the global sections
of $\Sym^2 E_C$ with the space $I_2(K_C)$ of quadrics containing
the canonical embedding of $C$.

One can then use the exact sequence \eqref{eq:normal-bundle-g9-sequence}
to prove that $\NormalBundle_{C/\PP^8}$
has no destabilizing quotients of rank $1$ or $2$.
Furthermore, one can obtain bounds for the degrees of quotient bundles of other ranks.
The calculations are similar to the genus $8$ case, just more lengthy.
However, we fear that these methods may not be potent enough to establish
stability with respect to quotient bundles of all ranks.

As a final note we present here some easy facts that
might prove useful in tackling the question of stability of the
normal bundle of a general canonical curve for arbitrary $g$.
First we establish normal generation for twists of the normal
and the conormal bundle.  We are thankful to Pete Vermeire for
pointing out simple arguments that hold in greater generality.
\begin{lemma}
  $\NormalBundle_{C/\PP^{g-1}}^\vee(2)$ is globally generated
  for every canonical curve with Clifford index $\Cliff(C) \geq 2$.
\end{lemma}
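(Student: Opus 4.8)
The plan is to deduce global generation directly from the classical fact that, under the hypothesis $\Cliff(C) \geq 2$, the canonically embedded curve is cut out by quadrics. I would begin by recalling the Enriques--Babbage--Petri theorem: for a non-hyperelliptic curve of genus $g \geq 3$, the homogeneous ideal $\mathcal{I}_C \subseteq \mathcal{O}_{\PP^{g-1}}$ of the canonical embedding is generated in degree $2$ unless $C$ is trigonal or a smooth plane quintic, in which case a cubic generator is also needed. Since $\Cliff(C) = 0$ holds exactly for hyperelliptic curves and $\Cliff(C) = 1$ exactly for trigonal curves and smooth plane quintics, the hypothesis $\Cliff(C) \geq 2$ is precisely the statement that $\mathcal{I}_C$ is generated by quadrics. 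So the first step is just to record this equivalence.

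Having this, I would rephrase it at the level of sheaves. Writing $I_2(K_C) = H^0(\PP^{g-1}, \mathcal{I}_C(2))$ for the space of quadrics through $C$, generation of the ideal by quadrics says exactly that the evaluation map
\[
  I_2(K_C) \otimes \mathcal{O}_{\PP^{g-1}}(-2) \longrightarrow \mathcal{I}_C
\]
is a surjection of sheaves on $\PP^{g-1}$. Next I would restrict this to $C$, i.e.\ apply $- \otimes_{\mathcal{O}_{\PP^{g-1}}} \mathcal{O}_C$; since tensor product is right exact, surjectivity is preserved. Because $C \hookrightarrow \PP^{g-1}$ is a regular embedding of smooth varieties one has $\mathcal{I}_C \otimes_{\mathcal{O}_{\PP^{g-1}}} \mathcal{O}_C = \mathcal{I}_C/\mathcal{I}_C^2 = \NormalBundle_{C/\PP^{g-1}}^\vee$, so after twisting by $\mathcal{O}_C(2) = \omega_C^{\otimes 2}$ one obtains a surjection
\[
  I_2(K_C) \otimes \mathcal{O}_C \longrightarrow \NormalBundle_{C/\PP^{g-1}}^\vee(2) \longrightarrow 0.
\]
This exhibits $\NormalBundle_{C/\PP^{g-1}}^\vee(2)$ as a quotient of a trivial bundle, hence globally generated; the sections realizing the generation are exactly the quadrics containing the canonical curve, consistent with the identification $H^0(C, \NormalBundle_{C/\PP^{g-1}}^\vee(2)) = I_2(K_C)$ used earlier in the paper.

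I do not expect a serious obstacle here: every step after invoking Petri's theorem is formal. The only point requiring a little care is the translation of the hypothesis, namely verifying that $\Cliff(C) \geq 2$ is exactly equivalent to ``ideal generated by quadrics'', which is settled by combining Petri's analysis of the canonical ideal with the classification of curves of Clifford index $0$ and $1$. One could instead try to argue through the twisted conormal sequence $0 \to \NormalBundle_{C/\PP^{g-1}}^\vee(2) \to \Omega_{\PP^{g-1}}(1)|_C \otimes \omega_C \to \omega_C^{\otimes 3} \to 0$, using that $\Omega_{\PP^{g-1}}(1)|_C = M_{\omega_C}$ is the kernel bundle of $\omega_C$; but a subbundle of a globally generated bundle need not be globally generated, so this route is less direct, and the ideal-theoretic argument above is the one I would carry out.
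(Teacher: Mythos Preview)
Your proof is correct and follows essentially the same route as the paper: both use that $\Cliff(C)\geq 2$ forces the canonical curve to be (scheme-theoretically) cut out by quadrics, so $\mathcal{I}_C(2)$ is globally generated, and then pass to the quotient $\mathcal{I}_C/\mathcal{I}_C^2(2)=\NormalBundle_{C/\PP^{g-1}}^\vee(2)$. The paper phrases this in slightly greater generality (any smooth $X\subset\PP$ cut out by quadrics) and is terser, while you spell out the Petri step and the restriction more explicitly, but the argument is the same.
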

\begin{proof}
  First note that $\Cliff(C) \geq 2$ implies that $C$
  is scheme-theoretically cut out by quadrics.
  Now, in fact, $\NormalBundle_{X/\PP}^\vee(2)$
  is globally generated for every smooth projective variety $X$
  embedded in a projective space $\PP$, such that $X$ is scheme-theoretically
  cut out by quadrics.

  Indeed, the assumption implies that 
  the ideal sheaf $\mathcal{I}_X(2)$ is globally generated
  and then of course the quotient $\mathcal{I}_X/\mathcal{I}_X^2 (2)$
  is globally generated as well.
\end{proof}
\begin{lemma}
  $\NormalBundle_{C/\PP^{g-1}}(-1)$ is globally generated for all canonical
  curves of every genus $g$.
\end{lemma}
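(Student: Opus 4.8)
The plan is to observe that this statement carries no input from the canonical embedding: it holds verbatim for any smooth subvariety of a projective space, and the only ingredient is that the twisted tangent bundle of projective space is globally generated. Concretely, twisting the Euler sequence on $\PP^{g-1}$ by $\mathcal{O}(-1)$ gives
\begin{equation*}
  0 \rightarrow \mathcal{O}_{\PP^{g-1}}(-1) \rightarrow \mathcal{O}_{\PP^{g-1}}^{\oplus g}
  \rightarrow \Tangent_{\PP^{g-1}}(-1) \rightarrow 0,
\end{equation*}
which exhibits $\Tangent_{\PP^{g-1}}(-1)$ as a quotient of a trivial bundle, hence globally generated.

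From here the argument is immediate. First I would restrict to $C$: the restriction of a globally generated sheaf is globally generated, so $\Tangent_{\PP^{g-1}}(-1)|_C$ is globally generated. Then I would twist the normal bundle sequence
\begin{equation*}
  0 \rightarrow \Tangent_C \rightarrow \Tangent_{\PP^{g-1}}|_C
  \rightarrow \NormalBundle_{C/\PP^{g-1}} \rightarrow 0
\end{equation*}
by $\mathcal{O}_C(-1)$ to obtain a surjection $\Tangent_{\PP^{g-1}}(-1)|_C \twoheadrightarrow \NormalBundle_{C/\PP^{g-1}}(-1)$. Since any quotient of a globally generated bundle is again globally generated --- the composite map $H^0$ of the ambient bundle onto the quotient remains surjective on global sections --- the claim follows.

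There is essentially no obstacle; the content is entirely in recalling that $\Tangent_{\PP^{g-1}}(-1)$ is globally generated. The only role of the canonical hypothesis is to make sense of the twist $\NormalBundle_{C/\PP^{g-1}}(-1)$ via $\mathcal{O}_C(1) = \omega_C$; with $X \subseteq \PP^N$ arbitrary in place of the canonical curve, the same proof shows $\NormalBundle_{X/\PP^N}(-1)$ is globally generated. It is worth noting that this complements the previous lemma, so that both $\NormalBundle_{C/\PP^{g-1}}^\vee(2)$ and $\NormalBundle_{C/\PP^{g-1}}(-1)$ are globally generated under the respective hypotheses there stated.
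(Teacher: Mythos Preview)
Your proof is correct and follows essentially the same route as the paper: both arguments observe that the result holds for any smooth $X \subseteq \PP^N$, use the Euler sequence to see that $\Tangent_{\PP}(-1)$ (equivalently, the bundle $Q_L$ in the paper's kernel-bundle notation) is globally generated, and then pass to the quotient via the normal bundle sequence. The only cosmetic difference is that the paper phrases the middle step in terms of $Q_L = M_L^\vee$ rather than $\Tangent_{\PP^{g-1}}(-1)|_C$ directly.
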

\begin{proof}
  In fact this is true for every smooth projective varieties
  $X$ embedded in projective space by some line bundle $L$.
  First recall that the dual of the evaluation sequence
  \begin{equation*}
    0 \rightarrow M_L \rightarrow H^0(X, L) \otimes \mathcal{O}_X \rightarrow L \rightarrow 0
  \end{equation*}
  shows that $Q_L = M_L^\vee$ is globally generated.
  Furthermore, the pullback to $X$ of the Euler sequence on $\PP \coloneqq \PP H^0(X, L)^\ast$
  identifies $\Omega_{\PP}|_X$ and $M_L(-1)$.

  Now consider the conormal exact sequence
  \begin{equation*}
    0 \rightarrow \NormalBundle_{X/\PP}^\vee
    \rightarrow \Omega_{\PP}|_X \rightarrow \Omega_X \rightarrow 0.
  \end{equation*}
  After dualizing and twisting by $\mathcal{O}_X(-1)$, this can be rewritten as
  \begin{equation*}
    0 \rightarrow \mathcal{T}_X(-1) \rightarrow
    Q_L \rightarrow \NormalBundle_{X/\PP}(-1) \rightarrow 0,
  \end{equation*}
  from which global generation of $\NormalBundle_{X/\PP}(-1)$ follows.
\end{proof}
This has an interesting consequence.
\begin{lemma}
  For every $r_0 > 0$ there is an integer $g_0$ such that the normal bundle
  $\NormalBundle_{C/\PP^{g-1}}$ of a general canonical curve of every genus
  $g \geq g_0$ has no destabilizing quotient bundle of rank $r \leq r_0$.
\end{lemma}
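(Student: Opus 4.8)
The plan is to combine the previous two lemmas on global generation with a counting argument. Recall that $\NormalBundle_{C/\PP^{g-1}}(-1)$ is globally generated of rank $g-2$ and slope $\mu(\NormalBundle_{C/\PP^{g-1}}(-1)) = (g+1)(2g-2)/(g-2) - (2g-2) = 3(2g-2)/(g-2)$, which tends to $6$ as $g \to \infty$. Suppose $\NormalBundle_{C/\PP^{g-1}}$ has a destabilizing quotient $\NormalBundle_{C/\PP^{g-1}} \twoheadrightarrow M$ with $\rk(M) = r \leq r_0$ and $\mu(M) \leq \mu(\NormalBundle_{C/\PP^{g-1}})$; equivalently, after the twist by $\mathcal{O}_C(-1)$, we get a quotient $M(-1)$ of $\NormalBundle_{C/\PP^{g-1}}(-1)$ with $\mu(M(-1)) \leq \mu(\NormalBundle_{C/\PP^{g-1}}(-1))$, hence $\deg(M(-1)) \leq r \cdot 3(2g-2)/(g-2)$, a bound which is at most $7r$ for $g$ large. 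Since $M(-1)$ is a quotient of a globally generated bundle it is itself globally generated, so it is globally generated of bounded rank $r \leq r_0$ and bounded degree $\leq 7 r_0$, coming equipped with a line bundle $L = \det M(-1)$ of degree $\deg(L) \leq 7 r_0$ carrying at least $r+1$ sections (by Lemma~\ref{lem:determinant-sections}, after checking $H^0(C, M(-1)^\vee) = 0$, which holds because $\NormalBundle_{C/\PP^{g-1}}^\vee(1)$ has no sections — it is a subsheaf of $\Omega_{\PP^{g-1}}|_C(-1) = M_{\omega_C}(-1)$, and $M_{\omega_C}(-1)$ is a subsheaf of $H^0(\omega_C)\otimes\mathcal{O}_C(-1)$ which has no sections).

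The key point is now a Brill--Noether obstruction. For a general curve of genus $g$, a line bundle $L$ with $h^0(C,L) \geq r+1$ and $\deg(L) = d$ exists only if the Brill--Noether number $\rho(g,r,d) = g - (r+1)(g-d+r) \geq 0$. With $d \leq 7 r_0$ and $r \leq r_0$ fixed, this forces $g \leq g_0$ for some $g_0 = g_0(r_0)$: indeed $\rho(g,r,d) = g - (r+1)(g-d+r)$ is negative once $g > (r+1)(d-r)/r$, and the right-hand side is bounded in terms of $r_0$. Thus for $g \geq g_0$ no such $L$, hence no such $M$, can exist on the general canonical curve, which is exactly the assertion. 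One subtlety is that a destabilizing quotient need not a priori be locally free, but one may pass to its locally free quotient (dividing out torsion), which only decreases the degree, so the argument is unaffected; similarly, the inequality $\mu(M) \leq \mu(\NormalBundle_{C/\PP^{g-1}})$ need not be strict for the counting to go through, so "no destabilizing quotient" in the semistable sense is what we get, and this suffices.

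The main obstacle is making the degree bound on $M(-1)$ completely clean and then verifying that $H^0(C, M(-1)^\vee) = 0$ so that Lemma~\ref{lem:determinant-sections} genuinely applies; both reduce to the fact that the conormal bundle, suitably twisted, has no sections, which follows from its inclusion into $M_{\omega_C}$ and the latter's stability with negative slope after twisting. Everything else — the asymptotics of the slope, the reduction to the locally free case, the Brill--Noether inequality — is routine once the setup is in place. I would write $g_0 = g_0(r_0)$ explicitly as, say, the smallest $g$ with $g - (r_0+1)(g - 7r_0 + r_0) < 0$ for all $1 \leq r \leq r_0$, but the precise value is immaterial for the statement.
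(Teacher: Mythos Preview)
Your argument is essentially the paper's: global generation of $\NormalBundle_{C/\PP^{g-1}}(-1)$, the bound $\deg F \leq 7r$ for $g\geq 8$, Lemma~\ref{lem:determinant-sections}, and then the Brill--Noether inequality; you are in fact more careful than the paper in handling torsion and in verifying the hypothesis $H^0(F^\vee)=0$ of Lemma~\ref{lem:determinant-sections}, which the paper silently skips. One slip in that verification: the conormal sequence gives $\NormalBundle_{C/\PP^{g-1}}^\vee(1) \hookrightarrow \Omega_{\PP^{g-1}}|_C(1) = M_{\omega_C}$ (not $M_{\omega_C}(-1)$), but since $H^0(C,M_{\omega_C})=0$ anyway the conclusion stands.
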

\begin{proof}
  Let $\NormalBundle_{C/\PP^{g-1}}(-1) \rightarrow F \rightarrow 0$ be
  a destabilizing quotient of rank $r$.
  Then $F$ is globally generated and has slope
  \begin{equation*}
    \mu(F) \leq \mu\big( \NormalBundle_{C/\PP^{g-1}}(-1) \big) = 6 \left( \frac{g - 1}{g - 2} \right)
  \end{equation*}
  We may assume $g \geq 8$.  Then in particular, $d = \deg(F) \leq 7r$.
  By Lemma \ref{lem:determinant-sections} we must have $h^0(C, \det(F)) \geq r + 1$.
  But if $g$ is large enough, then $\rho(g,r,d) < 0$.  
  We conclude that for these $g$ no such $F$ exists.
\end{proof}
Finally, let us note that in general it seems hard to decide
when precisely the normal bundle of an arbitrary (non-general) curve $C$ is unstable.
Not having a tetragonal pencil does not seem to be the right condition:
\begin{example}
  The normal bundle of a canonical curve can be unstable even if the
  curve is not tetragonal.
  Consider a curve of genus $9$ with a $\mathfrak{g}^3_9 \eqqcolon L$ but no $\mathfrak{g}^1_4$.
  Such curves are explicitly constructed in M.\@ Sagraloff's Ph.D.\@ thesis
  \autocite{Sagraloff2006}.
  We have $\rho(9,3,9) = -3$ as well as $\rho(9,1,4) = -3$, but the two
  Brill--Noether loci $\MgBNLocus{9}{4}{1}$ and $\MgBNLocus{9}{9}{3}$ are not equal.

  Consider the Serre dual $A = \omega_C \otimes L^{-1}$ of $L$, which is a $\mathfrak{g}^2_7$.
  By using $L$ and $A$, we obtain an embedding of $C$ into $\PP^2 \times \PP^3$.
  By composing this with the Segre embedding, we recover the canonical image of $C$.
  Hence we get a quotient $Q_L \otimes Q_A$ of $\NormalBundle_{C/\PP^8}(-1)$
  of slope $\mu(Q_A\otimes Q_L) = 13/2$.
  However, note that the normal bundle has slope $\mu(\NormalBundle_{C/\PP^8}(-1)) = 48/7$,
  which is strictly bigger.
\end{example}

\printbibliography

\Addresses

\end{document}